\theoremstyle{plain}
\newtheorem{theorem}{Theorem}[section]
\newtheorem{lemma}[theorem]{Lemma}
\newtheorem{conj}[theorem]{Conjecture}
\newtheorem{conjpl}[theorem]{Conjectures}
\theoremstyle{definition}
\newtheorem{defi}[theorem]{Definition}
\newtheorem{rem}[theorem]{Remark}
\newcommand{\eqdef}{\;{:=}\;}
\newcommand{\ra}{\longrightarrow}
\newcommand{\A}{{\mathbb A}}
\newcommand{\C}{{\mathbb C}}
\newcommand{\Q}{{\mathbb Q}}
\newcommand{\R}{{\mathbb R}}
\newcommand{\Z}{{\mathbb Z}}
\newcommand{\op}{\operatorname}
\newcommand{\GL}{\op{GL}}
\newcommand{\Hom}{\op{Hom}}
\newcommand{\tensor}{\otimes}
\newcommand{\G}{{\mathbb G}}
\newcommand{\res}{\op{Res}}
\newcommand{\hod}{\op{Hod}}
\newcommand{\chow}[2]{{\rm CH}^{#1}(#2)}
\newcommand{\qchow}[2]{{\rm CH}^{#1}(#2)_{{\mathbb Q}}}
\newcommand{\corr}[4]{{\rm Corr}^{#1}(#3 \times_{#2} #4)}
\begin{document}
\parindent=0pt

\title{Chow motives of universal Families over some Shimura surfaces}
\author{ Andrea Miller}
\address{Harvard University, Department of Mathematics, One Oxford Street, Cambridge, MA 02138 }
\email{millerae@math.harvard.edu}

\subjclass{14C25}
\keywords{Chow motive, Chow-K\"unneth decomposition, Shimura surface}

\begin{abstract} We prove an absolute Chow--K\"unneth decomposition for the motive of universal families $\mathcal{A}$ of
 abelian varieties over some compact Shimura surfaces. We furthermore prove the Hodge conjecture for general fibres $\mathcal{A}_t$
 of $\mathcal{A}$, extending results of Ribet.
\end{abstract}

\maketitle

\section{Introduction}
In this paper we prove the existence of absolute
Chow-K\"unneth decompositions for families of abelian varieties over some compact Picard modular
surfaces thus proving a conjecture of Murre for this case. What we mean by absolute will be explained below. For more on Murre's conjecture see below and section \ref{sec:conj}.
For the ease of the reader let us recall parts of the introduction of our joint paper \cite{we} to
introduce the circle of ideas which are behind Chow--K\"unneth
decompositions. For a general reference see
\cite{murre-motive}. \\
Let $Y$ be a smooth, projective $k$ --variety of dimension $d$ and
$H^*$ a Weil cohomology theory. In this paper we will mainly be
concerned with the case $k=\C$, where we choose singular cohomology
with rational coefficients as Weil cohomology. Grothendieck's
Standard Conjecture $C$ asserts that the K\"unneth components of the
diagonal $\Delta \subset Y \times Y$  in the cohomology $H^{2d}(Y
\times Y,\Q)$  are algebraic, i.e., cohomology classes of algebraic
cycles. In the case $k=\C$ this follows from the Hodge conjecture.
Since $\Delta$ is an element in the ring of correspondences, it is
natural to ask whether these algebraic classes come from algebraic
cycles $\pi_j$ which form a complete set of orthogonal idempotents
$$
\Delta = \pi_0 +\pi_1 + \ldots +\pi_{2d} \in CH^d(Y \times Y)_\Q
$$
summing up to $\Delta$. Such a decomposition is called a {\sl
Chow--K\"unneth decomposition} and it is conjectured to exist for
every smooth, projective variety. One may view $\pi_j$ as a Chow
motive representing the projection onto the $j$--the cohomology
group in an universal way. There is also a corresponding notion for
$k$--varieties which are relatively smooth over a base scheme $S$.
See also section \ref{sec:conj}, where Murre's refinement of this conjecture with
regard to the Bloch--Beilinson filtration is discussed.
Chow--K\"unneth decompositions for abelian varieties were first
constructed by Shermenev and later more generally over any base scheme by Deninger and Murre (\cite{den-mur}). Building on work of Beauville, they use Fourier-Mukai transforms to construct projectors, see \cite{den-mur} and section \ref{sec:conj}. Fourier--Mukai transforms may be
effectively used to write down the projectors, see also
\cite{motives-kuenne}. The case of surfaces was
treated by Murre~\cite{murre-surface}. He in particular gave a
general method to construct the projectors $\pi_1$ and $\pi_{2d-1}$,
leading to the so--called Picard and Albanese Motives. Some special
classes of $3$--folds were considered in \cite{dAMS}.\\
The line of work on modular varieties was begun by Gordon and Murre in \cite{gordon-murre} on elliptic modular threefolds and by Gordon, Hanamura and Murre in \cite{gordon-hanamura-murre-i, gordon-hanamura-murre-iii} on Hilbert modular varieties.
 As these two cases represent the two most well-known and extensively studied Shimura varieties and many fundamental
 connections to modular forms and $\ell$-adic Galois representations are known and studied, it was natural to undertake an attempt to treat "the next important" Shimura variety (depending on taste associated to $GU(n,1)$ or to $GSp_4$ ).
 But as shown in \cite{we} even in a very specific case of a carefully chosen single Picard modular surface, involving many special choices,
 the assumptions of the main theorem of   \cite{gordon-hanamura-murre-i, gordon-hanamura-murre-iii} proved to be too strong and a Chow-K\"unneth decomposition could not be obtained. In a way the main result of \cite{we} was to show that the methods of \cite{gordon-hanamura-murre-i, gordon-hanamura-murre-iii} fail and how the difficulties increase as soon as one moves away from $Gl_2$.
 One of the side results of the present paper shows that theorem 1.3. of \cite{gordon-hanamura-murre-iii} also cannot be applied (for different reasons than in \cite{we}) in the case of the compact Picard modular surfaces we are considering. Even though the approach of \cite{gordon-hanamura-murre-iii} seems to fail for modular varieties which are neither elliptic nor Hilbert, it were the ideas of this approach
 that enabled us to attack absolute Chow-K\"unneth decompositions for the modular varieties treated in this paper. We will now give a more detailed description of the problem and our method of attack.\\

Let us assume that we have a family $f:A \to X$ of abelian varieties over a smoth variety $X$ (which will be a Shimura variety in our case). Since all fibers are abelian,
we obtain a relative Chow---K\"unneth decomposition over $X$ in the sense of Deninger and Murre (\cite{den-mur}),
i.e., algebraic cycles $\Pi_j$ in
$A \times_X A$ which sum up to the diagonal $\Delta_{A/X}$. One may view $\Pi_j$ as a projector related to
$R^jf_*\C$. Now let $\overline{f}: \overline{A} \to \overline{X}$ be a compactification of the family.
We will, for a moment, use the language of perverse sheaves as in \cite{BBD}, in particular the notion of a stratified map.

In \cite{corti-hanamura} Corti and Hanamura have formulated a motivic analogue of the topological decomposition theorem of \cite{BBD}. In \cite{gordon-hanamura-murre-ii} Gordon, Hanamura and Murre have formulated this as {\sl Motivic Decomposition Conjecture}:

\begin{conj} \label{MDC} Let $\overline{A}$ and $\overline{X}$ be quasi--projective varieties over $\C$, $\overline{A}$
smooth, and $\overline{f}: \overline{A} \to \overline{X}$ a projective map. Let $\overline{X}=X_0 \supset X_1 \supset
\ldots \supset X_{\dim(X)}$ be a stratification of $\overline{X}$ so that $\overline{f}$ is a stratified map. Then there are local
systems ${\mathcal V}^j_\alpha$ on $X_\alpha^0=X_\alpha \setminus X_{\alpha-1}$, a complete set
$\Pi^j_\alpha$ of orthogonal projectors and isomorphisms
$$
\sum_{j,\alpha} \Psi^j_\alpha: R\overline{f}_*\Q_{\overline{A}} {\buildrel \cong \over \to}
\bigoplus_{j,\alpha} IC_{X_\alpha}( {\mathcal V}^j_\alpha)[-j-\dim(X_\alpha)]
$$
in the derived category.
\end{conj}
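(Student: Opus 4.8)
The plan is to follow the blueprint of the topological decomposition theorem of \cite{BBD} (in the form used in its Hodge-theoretic proof) and to lift it, stratum by stratum, to Corti and Hanamura's category of relative Chow motives over $\overline{X}$, as proposed in \cite{corti-hanamura} and \cite{gordon-hanamura-murre-ii}. The argument should be a double induction: on $\dim(\overline{X})$, and, for fixed $\overline{X}$, a descending induction along the given stratification $\overline{X}=X_0 \supset X_1 \supset \cdots$.

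First, over the open stratum $X_0^0 = \overline{X}\setminus X_1$ the map $\overline{f}$ is smooth and projective, so $R^j\overline{f}_*\Q_{\overline{A}}|_{X_0^0}$ is a local system, which we name $\mathcal{V}^j_0$. Here one needs the relative K\"unneth projectors $\Pi^j_0 \in CH^{\dim(\overline{A})}(\overline{A}\times_{X_0^0}\overline{A})_\Q$ to be algebraic and to form a complete orthogonal system; this is the relative standard conjecture over the base, and it holds unconditionally when the fibres are abelian varieties by Deninger and Murre \cite{den-mur}. The summand $IC_{X_0}(\mathcal{V}^j_0)[-j-\dim(X)]$ is then the intermediate extension to $\overline{X}$ of the piece cut out by $\Pi^j_0$. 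Next, one restricts $\overline{f}$ over $X_1$, chooses a resolution, invokes the inductive hypothesis on dimension to decompose the resulting family over the strata of $X_1$, and defines the ``new'' local systems $\mathcal{V}^j_\alpha$ ($\alpha\ge 1$) as the cohomology sheaves of the complement, inside $R\overline{f}_*\Q_{\overline{A}}$, of the IC-summands already produced on larger strata, each realised as $IC_{X_\alpha}(\mathcal{V}^j_\alpha)[-j-\dim(X_\alpha)]$. The projectors $\Pi^j_\alpha$ are assembled from the relative K\"unneth projectors on $X_\alpha^0$ together with correspondences supplied by the chosen resolutions and by Gysin and restriction maps between strata; orthogonality and completeness are reduced to a statement in cohomology, provided one knows that a self-correspondence which is a projector in cohomology is, up to a nilpotent correction, a projector in $CH$. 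Finally, the isomorphism $\sum_{j,\alpha}\Psi^j_\alpha$ is built by the same descending induction: over $X_0^0$ it is the decomposition furnished by the $\Pi^j_0$, and extending it across each boundary $X_\alpha \setminus X_{\alpha+1}$ amounts to killing an obstruction class in a group of the form $\op{Ext}^1$ between IC-motives supported on $X_\alpha$ and on $X_{\alpha+1}$.

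The hard part will be that every structural input invoked above --- algebraicity of relative K\"unneth projectors in general, a motivic analogue of relative hard Lefschetz together with semisimplicity of perverse cohomology (which is exactly what controls the $\op{Ext}^1$-obstructions and hence the gluing), and the lifting of cohomological idempotents to Chow-theoretic ones --- is itself conjectural; this is precisely why Conjecture \ref{MDC} is open beyond special situations. In the setting of the present paper the base is a compact Picard modular surface, the family $f:A\to X$ is smooth and projective with abelian fibres, and the relevant stratification is trivial, so the first step alone (via \cite{den-mur}) already yields an absolute Chow--K\"unneth decomposition unconditionally; the work of the paper is to carry this reduction out carefully and to extract from it the Hodge-theoretic consequences for the general fibres $\mathcal{A}_t$.
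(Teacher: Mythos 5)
The statement you are asked about is Conjecture \ref{MDC}, the Motivic Decomposition Conjecture of Corti--Hanamura and Gordon--Hanamura--Murre. The paper does not prove it and does not claim to: it is stated purely as motivation, and the text immediately afterwards explains that the author \emph{bypasses} it, because in her situation the base is projective, the family is smooth with abelian fibres, and the stratification is trivial. So there is no proof in the paper against which to compare yours. Your write-up is candid about this --- you call it a ``proof strategy'' and you correctly identify in your final paragraph that the inputs you need (algebraicity of relative K\"unneth projectors in general, a motivic relative hard Lefschetz and semisimplicity controlling the $\op{Ext}^1$-gluing obstructions, lifting of cohomological idempotents to Chow idempotents modulo nilpotents) are all themselves open, which is exactly why the conjecture is a conjecture. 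As a sketch of how one would attack it following \cite{BBD}, \cite{corti-hanamura} and \cite{gordon-hanamura-murre-ii}, the outline is reasonable and consistent with how those papers frame the problem; just be clear that what you have written is not, and cannot at present be, a proof.

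One substantive correction to your last paragraph: it is not true that in the setting of this paper ``the first step alone (via \cite{den-mur})\ already yields an absolute Chow--K\"unneth decomposition unconditionally.'' Deninger--Murre gives only the \emph{relative} Chow--K\"unneth decomposition of $\mathcal{A}/X$ (assumption (1) of Theorem \ref{ghm2}). Passing from the relative to the absolute decomposition is the actual content of the paper, and it requires three further inputs: a Chow--K\"unneth decomposition of the base surface $X$ (Murre, \cite{murre-surface}); surjectivity of the cycle class map onto the Hodge-group invariants $H^{2r}_B(\mathcal{A}_t,\Q)^{\op{Hod}(\mathcal{A}_t)}$ for a general fibre, which the author obtains by proving the Hodge conjecture for $\mathcal{A}_t$ by extending Ribet's $(1,1)$-criterion \cite{ribet}; and the vanishing of $H^q(X,\mathcal{V}^i)$ for $q\neq 2$ for the specific local systems $\mathcal{V}^i$ built from $R^ip_*\C$, checked case by case in Section 6. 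None of these is a formal consequence of the relative decomposition, so the reduction is genuinely nontrivial even when Conjecture \ref{MDC} is sidestepped.
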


This conjecture asserts of course more than a relative Chow--K\"unneth decomposition for the smooth part $f$
of the morphism $\overline{f}$. Due to the generally complicated structure of the strata its proof in special cases generally needs some more
information about the geometry of the stratified morphism $\overline {f}$.
In our case we can bypass this difficulty since we are working with projective varieties. We further know that in our case a Chow-K\"unneth decomposition exists for the base modular variety $\overline{X}=X$ by Murre's result \cite{murre-surface} on surfaces. By ''absolute'' Chow-K\"unneth decomposition of $\overline{A}$ we mean a Chow-K\"unneth decomposition of $\overline{A}\ra\overline{X}\ra \op{Spec}k$, ( $\overline{X}$ is defined over $\op{Spec}k$). We are now left with the two main difficulties of the construction of Chow-K\"unneth decompositions.
\begin{enumerate}
\item Obtain information on the image of the cycle class map for the fibres of $\overline{A}\ra\overline{X}.$
\item Obtain as much vanishing as possible for the cohomology  $H^*(X,\mathcal{V})$, where $\mathcal{V}$ is a local coefficient system in order to make the Leray spectral sequence of the fibration $\overline{A}\ra\overline{X}$ as computable as possible.
\end{enumerate}
In \cite{gordon-hanamura-murre-iii} difficulty 1 was tackled by using Ribet's result \cite{ribet}. Difficulty 2 didn't exit by the vanishing theorem of Matsushima-Shimura (\cite{matsushima-shimura}).
In \cite{we} difficulty 1 could still be handled by Ribet's results, but difficulty 2 couldn't be handled, because for the non-compact Picard modular surface considered $H^*(X,\mathcal{V})$ doesn't vanish outside of the middle degree. In the present paper we will extend Ribet's results in order to deal with difficulty 1, and difficulty 2 is dealt with by checking the vanishing of $H^*(X,\mathcal{V})$ for $*\neq 2$ for a very specific local coefficient system case by case by hand.\\
A new approach of this paper compared to previous work on this subject consists in working with (special) Mumford-Tate groups instead of monodromy groups. In our case this makes the crucial difference for the vanishing of the cohomology of the underlying Shimura variety (see section 6).

It should be interesting to look at cases where neither Ribet's $(1,1)$-criterion holds nor vanishing outside of the middle degree exists.

\bigskip

The contents of this paper are organized as follows: In section \ref{sec:conj} we recall Murre's conjectures and the construction of Chow-K\"unneth projectors for abelian varieties. In section \ref{sec:surface} we describe the Shimura surfaces we are working with and recall their modular interpretation. In section \ref{sec:general} we formulate a slight generalization of theorem 1.3 of \cite{gordon-hanamura-murre-iii} and prove it. In section \ref{sec:hodge} we prove the Hodge conjecture for a certain class of abelian varieties arising from our moduli problem. Here we extend work of Ribet (see \cite{ribet}). Knowing the Hodge conjecture will enable us to provide some of the assumptions needed in theorem \ref{ghm2}.
To deal with another assumption of theorem \ref{ghm2} we will have to
provide the vanishing of certain cohomology groups of the Shimura surface. This is done in section \ref{sec:cohomology}. Finally in section \ref{sec:proof} we put everything together to prove the existence of absolute Chow-K\"unneth projectors for our Shimura surfaces.

\section{Standard Conjecture ´´C´´ and Murre's Conjecture}\label{sec:conj}

Let us briefly recall some definitions, results and conjectures from the theory
of Chow motives. We refer to \cite{murre-motive} for details.
\subsection{}
For a smooth projective variety $Y$ over a field $k$ let
$\chow{j}{Y}$ denote the Chow group of algebraic cycles of
codimension $j$ on $Y$ modulo rational equivalence, and let
$\qchow{j}{Y}\eqdef\chow{j}{Y}\otimes\Q.$ For a cycle $Z$ on $Y$
we write $[Z]$ for its class in $\chow{j}{Y}.$ We will be working
with relative Chow motives as well, so let us fix a smooth
connected, quasi-projective base scheme $S\to \op{Spec}k.$ If
$S=\op{Spec}k$, we will usually omit $S$ in the notation. Let
$Y,Y'$ be smooth projective varieties over $S$, i.e., all fibers are smooth.
For ease of notation (and as we will not consider more general cases) we may
assume that $Y$ is irreducible and of relative dimension $g$ over
$S.$ The group of relative correspondences from $Y$ to $Y'$ of
degree $r$ is defined as
\[\corr{r}{S}{Y}{Y'}\eqdef \qchow{r+g}{Y\times_S Y'}.\]
Every $S$-morphism $Y'\to Y$ defines an element in
$\corr{0}{S}{Y}{Y'}$ via the class of the transpose of its graph.
In particular one has the class
$[\Delta_{Y/S}]\in\corr{0}{S}{Y}{Y}$ of the relative diagonal.
The self correspondences of degree $0$ form a ring, see \cite[p. 127]{murre-motive}.
Using the relative correspondences one proceeds as usual to define
the category $CH\mathcal{M}(S)$ of (pure) Chow motives over $S.$ The
objects of this pseudoabelian $\Q$-linear tensor category are
triples $(Y,p,n)$ where $Y$ is as above, $p$ is a projector, i.e.
an idempotent element in $\corr{0}{S}{Y}{Y},$ and $n\in\Z.$ The
morphisms are
\[\Hom_{\mathcal{M}_S}((Y,p,n),(Y',p',n'))
\eqdef p'\circ \corr{n'-n}{S}{Y}{Y'}\circ p.\]
When $n=0$ we write $(Y,p)$ instead of $(Y,p,0),$ and $h(Y)\eqdef
(Y,[\Delta_Y]).$

\subsection{Murre's conjectures}

Now denote by $\mathfrak{SP}/k$ the category of smooth projective varieties over a field $k$.

Fix a Weil cohomology theory $H^*$ and $Y\in\mathfrak{SP}/k$ and let $\op{dim} Y=d$. Then by the K\"unneth formula we have
$$H^{2d}(Y\times Y)(d)=\oplus _{n=0}^{2d} H^n(Y)\tensor H^{2d-n}(Y)(d).$$
Since $Y$ is projective, by Poincar\'e duality, we have
$$H^{2d-n}(Y)(d)=H^n (Y)^{\vee},$$
where $\vee$ denotes dual cohomology. We then get
\begin{eqnarray}
H^{2d}(Y\times Y)(d)&=&\oplus _{n=0}^{2d} H^n(Y)\tensor H^n (Y)^{\vee}\nonumber \\ &=&\oplus _{n=0}^{2d} Hom (H^n(Y), H^n (Y)).
\nonumber\end{eqnarray}
We can thus identify $H^{2d}(Y\times Y)(d)$ with the vector space of graded $k$-linear maps $f:H^*(Y)\ra H^* (Y)$. In particular we can  write
$$id_{H^*(Y)}=\sum _{n=0}^{2d} \overline{\pi} _{Y}^n \quad\textrm{ where }\overline{\pi} _{Y}^n\in H^n(Y)\tensor H^n (Y)^{\vee}.$$

The projector $$\overline{\pi} _{Y}^n : H^*(Y)\ra H^* (Y)$$ is called the {\sl n-th K\"unneth projector}.

\begin{conj}[Grothendieck]
The K\"unneth projectors
$$\overline{\pi} _{Y}^n : H^*(Y)\ra H^* (Y), n=0\ldots 2d$$
are algebraic.
\end{conj}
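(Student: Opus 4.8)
\medskip
\noindent\emph{Sketch of proof (and of the cases that are needed below).}
The plan is to reduce the algebraicity of the K\"unneth projectors to the Hodge conjecture when $k=\C$ (and to the Lefschetz standard conjecture in general), and then to invoke the instances of those conjectures that are theorems and that suffice for the varieties occurring in this paper.

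\textbf{Step 1: the $\overline{\pi}_Y^n$ are Hodge classes.} Under the identification established above,
$$H^{2d}(Y\times Y)(d)=\bigoplus_{n=0}^{2d}\Hom(H^n(Y),H^n(Y)),$$
the class $\overline{\pi}_Y^n$ is the identity endomorphism of the pure weight-$n$ Hodge structure $H^n(Y)$ on the $n$-th summand and $0$ on all others. The identity is a morphism of $\Q$-Hodge structures, hence a class of Hodge type $(0,0)$; therefore $\overline{\pi}_Y^n\in H^{2d}(Y\times Y)(d)\cap H^{d,d}(Y\times Y)$ is a rational Hodge class, and the Hodge conjecture for $Y\times Y$ yields an algebraic cycle representing it, i.e. Standard Conjecture $C$ for $Y$. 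Over a general field one argues instead via hard Lefschetz: expressing the $\overline{\pi}_Y^n$ in terms of the Lefschetz operator $L$ and the primitive decomposition shows that Standard Conjecture $B$ for $Y$ implies $C$ for $Y$, and $B$ is stable under products.

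\textbf{Step 2: the unconditional cases used in the sequel.} For $Y$ a curve one may take $\overline{\pi}_Y^0=[\{P\}\times Y]$, $\overline{\pi}_Y^2=[Y\times\{P\}]$ and $\overline{\pi}_Y^1=[\Delta_Y]-\overline{\pi}_Y^0-\overline{\pi}_Y^2$; for $Y$ a surface Murre's construction of the Picard and Albanese projectors \cite{murre-surface} provides all five; for $Y$ an abelian variety the explicit Chow--K\"unneth decomposition of Shermenev and of Deninger--Murre \cite{den-mur}, built from Fourier--Mukai transforms, gives algebraic cycles whose cohomology classes are the $\overline{\pi}_Y^n$; and, $B$ being stable under products, the same follows for anything dominated by products of curves and abelian varieties. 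These are exactly the pieces needed later --- the base Shimura surface $\overline{X}$ and the abelian-variety fibres $\mathcal{A}_t$.

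\textbf{Main obstacle.} The reduction in Step 1 is tautological once one already has the Hodge conjecture for $Y\times Y$, and it says nothing new for the total space $\overline{\mathcal{A}}$ of the universal family, which is a higher-dimensional variety whose Hodge conjecture is not known. To obtain $C$ (indeed the finer Chow--K\"unneth decomposition of Murre's conjecture) for $\overline{\mathcal{A}}$ one must instead assemble the projectors from those of the base and of the fibres, via the relative Chow--K\"unneth decomposition of Deninger--Murre together with sufficient control of the Leray spectral sequence of $\overline{\mathcal{A}}\to\overline{X}$ and of the image of the fibrewise cycle class map. This is precisely the programme --- difficulties $1$ and $2$ in the introduction --- carried out in the remaining sections.
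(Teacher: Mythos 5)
The statement you were asked about is Grothendieck's Standard Conjecture $C$; the paper states it as a conjecture and gives no proof (none is known in general), so there is nothing to compare your argument against line by line. Your sketch matches the paper's own framing exactly: the reduction of Step 1 (the $\overline{\pi}_Y^n$ are rational $(d,d)$-classes, so over $\C$ algebraicity follows from the Hodge conjecture for $Y\times Y$) is precisely the remark made in the paper's introduction, and the unconditional cases you list in Step 2 --- surfaces via Murre and abelian varieties via Shermenev/Deninger--Murre --- are exactly the inputs the paper actually uses. You are also right, in your closing paragraph, that this does not yield the conjecture for the total space of the universal family and that the paper's actual contribution is to assemble absolute Chow--K\"unneth projectors there from the relative decomposition, the fibrewise cycle class map, and vanishing of coefficient cohomology; no correction is needed.
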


Since the diagonal $[\Delta_{Y/k}]\in\chow{d}{Y\times Y}$ is mapped to $id_{H^*(Y)}$ by the cycle class map we can ask further if the $\overline{\pi} _{Y}^n ,\; (n=0,\ldots 2d)$ lift to orthogonal projectors
$\pi_0 ,\ldots ,\pi_{2d} \in \chow{d}{Y\times Y}.$

\begin{defi}
For a smooth projective variety $Y/k$ of dimension $d$ a
{\sl Chow-K{\"u}nneth-decomposition} of $Y$ consists of a collection of
pairwise orthogonal projectors $\pi_0,\ldots,\pi_{2d}$ in
$\corr{0}{}{Y}{Y}$ satisfying
\begin{enumerate}
\item $\pi_0+\ldots +\pi_{2d}=[\Delta_Y]$ and
\item for some Weil cohomology theory $H^*$ one has $\pi_i(H^*(Y))=H^i(Y).$
\end{enumerate}
\end{defi}
If one has a Chow-K{\"u}nneth decomposition for $Y$ one writes
$h^i(Y)=(Y,\pi_i)$. A similar notion of   a {\sl relative
Chow-K{\"u}nneth-decomposition} over $S$ can be defined in a straightforward manner, see \cite{murre-motive}.

The existence of such a decomposition for every smooth projective variety is part of the following
conjectures of Murre:

\begin{conjpl}
Let $Y$ be a smooth projective variety of dimension $d$ over some
field $k.$
\begin{enumerate}
\item There exists a Chow-K\"unneth decomposition for $Y$.
\item For all $i<j$ and $i>2j$ the action
of $\pi_i$ on $CH^j(Y)_\Q$ is trivial, i.e. $\pi_i\cdot CH^j(Y)_\Q=0$.
\item The induced $j$ step filtration on
$$
F^\nu CH^j(Y)_\Q:= {\rm Ker}\pi_{2j} \cap \cdots \cap
{\rm Ker}\pi_{2j-\nu+1}
$$
is independent of the choice of the
Chow--K\"unneth projectors, which are in general not canonical.
\item The first step of this filtration should give exactly the
subgroup of homological trivial cycles $CH^j(Y)_\Q$ in $CH^j(Y)_\Q$.
\end{enumerate}
\end{conjpl}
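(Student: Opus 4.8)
The plan is to treat the four assertions in order, since (2)--(4) presuppose (1) and each builds on the one before. For (1) the strategy is to reduce to classes of varieties for which the K\"unneth components of the diagonal are already known to be algebraic and then to lift them to pairwise orthogonal idempotents in $\chow{d}{Y\times Y}_\Q$: curves (classical, using a degree-one zero-cycle), surfaces (Murre's construction of the Picard and Albanese motives $h^1$ and $h^{2d-1}$, with the remaining piece obtained by subtraction, see \cite{murre-surface}), and abelian varieties or abelian schemes (the Deninger--Murre construction \cite{den-mur} via the Fourier--Mukai transform together with Beauville's eigenspace decomposition of the Chow groups under multiplication by $n$). For a general $Y$ one would fibre it over a base $S$ of strictly smaller dimension whose general fibre lies in one of these classes, produce a relative Chow--K\"unneth decomposition over $S$, and assemble it with a Chow--K\"unneth decomposition of $S$; this is precisely the mechanism behind the Corti--Hanamura and Gordon--Hanamura--Murre decomposition results, and it is, in modified form, the route taken in this paper for the universal abelian scheme over a compact Picard modular surface.

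Granting (1), part (2) is a vanishing statement attacked through support and dimension considerations: one arranges the extremal projectors so that $\pi_0,\pi_1,\pi_{2d-1},\pi_{2d}$ are supported on subschemes of dimension $0$ and $1$, and uses that a correspondence supported on a subvariety of dimension $e$ annihilates $\chow{j}{Y}_\Q$ once $j$ falls outside a window of width roughly $e$, which yields the stated ranges $i<j$ and $i>2j$; this can be carried out completely for curves, surfaces and abelian varieties but is open in general because the middle projectors cannot be controlled. For part (3) the approach is a Manin-type identity-principle argument: any two systems $\{\pi_i\}$ and $\{\pi_i'\}$ of Chow--K\"unneth projectors differ by a homologically trivial self-correspondence, one would show that conjugation by such a correspondence preserves each $F^\nu\chow{j}{Y}_\Q$, and conclude that the filtration is independent of the choices. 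Part (4) is the deepest: one must identify $F^1\chow{j}{Y}_\Q=\Ker\pi_{2j}$ with the group of homologically trivial cycles, i.e. prove that $\pi_{2j}$ induces the identity on $\chow{j}{Y}_\Q$ modulo homological equivalence while the complementary projectors cut out exactly the kernel of the cycle class map.

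The hard part --- in fact what keeps this a conjecture and not a theorem --- is twofold. First, there is no uniform construction of the projectors for a general $Y$: algebraicity of the K\"unneth components is itself only known under the Standard Conjecture $C$ (open in positive characteristic, and over $\C$ a consequence of the Hodge conjecture), and even granting it the middle-dimensional projector $\pi_d$ is not known to exist in general. Second, controlling the action on $\chow{\ast}{Y}_\Q$ beyond the first filtration step is entangled with the Bloch--Beilinson conjectures and is out of reach of present methods. Consequently a proof valid for all smooth projective $Y$ is not available; what one can realistically do --- and what this paper carries out for specific families $f: A\to X$ over compact Picard modular surfaces --- is to establish an absolute Chow--K\"unneth decomposition by exploiting the relative Chow--K\"unneth decomposition furnished by the abelian fibres together with (i) enough information on the image of the cycle class map on the fibres, obtained here by extending Ribet's results \cite{ribet}, and (ii) enough vanishing of $H^\ast(X,\mathcal V)$ outside the middle degree for the relevant local systems $\mathcal V$, which collapses the Leray spectral sequence of $\overline{A}\to\overline{X}$ far enough to write down the extremal projectors explicitly.
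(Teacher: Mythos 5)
This statement is Murre's set of conjectures, stated in the paper as a \emph{conjecture} (the \texttt{conjpl} environment) with no proof given or intended; the paper only records that the conjectures hold for surfaces, that Jannsen proved their equivalence with the Bloch--Beilinson conjecture, and then verifies part (1) for the specific universal families at hand. Your write-up correctly recognizes this: it is not a proof but an accurate survey of the known partial results (curves, surfaces via Murre, abelian schemes via Deninger--Murre) and of the obstructions to a general argument, which is consistent with the paper's own discussion; there is nothing in the paper to compare it against, and no gap to report beyond the one you yourself identify, namely that the general statement is open.
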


\begin{rem}
U. Jannsen showed in \cite{jannsen} that Murre's conjectures are equivalent to the Bloch-Beilinson conjecture.
\end{rem}

There are not many examples for which these conjectures have been
proved, but they are known to be true for surfaces
\cite{murre-motive}.
Furthermore for abelian schemes over  a smooth projective base Deninger and Murre have constructed relative projectors in \cite{den-mur}, generalizing work of Shermenev and Beauville.

\subsection{Projectors for Abelian Varieties}
Let $S$ be a fixed base scheme. We will now
state the results on relative Chow motives in the case that $A$
is an abelian scheme of fibre dimension $g$ over $S.$
Firstly we have a functorial decomposition of the relative
diagonal $\Delta_{A/S}.$
\begin{theorem}
\label{uniquedec}
There is a unique decomposition
\[\Delta_{A/S}= \sum_{s=0}^{2g}\Pi_i\qquad\text{in}\qquad \qchow{g}{A\times_S A}\]
such that $(id_A\times [n])^*\Pi_i=n^i\Pi_i$ for all $n\in\Z.$
Moreover the $\Pi_i$ are mutually orthogonal idempotents, and
$[{}^t\Gamma_{[n]}]\circ\Pi_i=
n^i\Pi_i=\Pi_i\circ[{}^t\Gamma_{[n]}],$ where $[n]$ denotes the
multiplication by $n$ on $A.$
\end{theorem}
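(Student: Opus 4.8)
The plan is to reduce everything to the theory of the Fourier–Mukai transform on the relative setting, following Deninger–Murre \cite{den-mur}, who in turn build on Beauville's eigenspace decomposition for abelian varieties. First I would introduce the Poincaré bundle $\mathcal{P}$ on $A \times_S A^\vee$, where $A^\vee$ is the dual abelian scheme, and the associated Fourier–Mukai correspondence $\mathcal{F} = \op{ch}(\mathcal{P}) \in \qchow{*}{A \times_S A^\vee}$. Using $\mathcal{F}$ together with a choice of symmetric relative ample divisor (or simply the canonical principal polarization in the cases ultimately needed) one transports the grading on $\qchow{*}{A^\vee}$ back to $A$; the key computational input is Beauville's formula describing how $[n]^*$ acts on the graded pieces, namely that the Fourier transform interchanges the operators $[n]^*$ and $[n]_*$ up to the appropriate power of $n$.

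The construction of the $\Pi_i$ then proceeds as follows. One shows that on $A \times_S A$ the operator $(id_A \times [n])^*$ acts on $\qchow{g}{A \times_S A}$ with eigenvalues among $\{n^0, n^1, \dots, n^{2g}\}$; this is where the Fourier–Mukai machinery does the real work, since it linearizes the problem and identifies the simultaneous eigenspaces for all $n$ at once. Define $\Pi_i$ to be the projection of $\Delta_{A/S}$ onto the $n^i$-eigenspace. Because the family of operators $\{(id_A\times[n])^*\}_{n\in\Z}$ commutes, the eigenspace decomposition is canonical, which gives both existence and uniqueness of a decomposition with the stated eigenvalue property. Orthogonality and idempotence of the $\Pi_i$ follow from the fact that composition of correspondences is compatible with the $[n]$-action in the appropriate bigraded sense: $(id\times[n])^*$ respects composition after one also tracks the action on the first factor, so the product $\Pi_i \circ \Pi_j$ lands in an eigenspace forcing it to vanish unless $i=j$, and $\Pi_i\circ\Pi_i = \Pi_i$ because $\Delta_{A/S}$ is an idempotent for $\circ$. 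The final identity $[{}^t\Gamma_{[n]}]\circ \Pi_i = n^i\Pi_i = \Pi_i \circ [{}^t\Gamma_{[n]}]$ is obtained by the standard manipulation relating composition with the transposed graph of $[n]$ to the pullback $(id_A\times[n])^*$ (respectively $([n]\times id_A)^*$) applied to $\Pi_i$, using that $\Pi_i$ already sits in the correct eigenspace.

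The main obstacle is carrying out the eigenvalue analysis of $(id_A\times[n])^*$ in full rigor over a general base $S$ rather than over a field: one must know that the relevant Chow groups with $\Q$-coefficients decompose into the finitely many eigenspaces indexed by powers of $n$, uniformly in $n$, and that no other eigenvalues occur. Over a field this is Beauville's theorem; the relative statement requires checking that the Fourier–Mukai formalism and Beauville's identities hold verbatim for abelian schemes, which is precisely the content of \cite{den-mur}. Since the excerpt allows us to quote Deninger–Murre's construction, the proof amounts to invoking their result and recording the listed formal consequences (orthogonality, idempotence, the $\Gamma_{[n]}$-identities), all of which are bookkeeping once the eigenspace decomposition is in hand.
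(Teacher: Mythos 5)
Your proposal is correct and matches the paper exactly: the paper's entire proof of this theorem is the citation of Deninger--Murre \cite[Thm.~3.1]{den-mur}, and your sketch of their Fourier--Mukai/Beauville eigenspace argument (decomposing $\Delta_{A/S}$ under the commuting operators $(id_A\times[n])^*$ and deducing orthogonality, idempotence, and the $\Gamma_{[n]}$-identities as formal consequences) is a faithful outline of what that reference proves. Nothing further is needed.
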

\begin{proof}
\cite[Thm.~3.1]{den-mur}
\end{proof}

Putting $h^i(A/S)=(A/S,\Pi_i)$ one has a Poincar\'{e}-duality for
these motives.

\begin{theorem}{\em (Poincar\'e-duality)}
\[h^{2g-i}(A/S)^\vee\simeq h^{i}(A/S)(g)\]
\end{theorem}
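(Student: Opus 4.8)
The plan is to deduce the statement from a single algebraic identity between the projectors of Theorem \ref{uniquedec}, namely ${}^t\Pi_i=\Pi_{2g-i}$, together with the standard rigid structure on the category of relative Chow motives.

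First I would record the reduction. Recall that $CH\mathcal{M}(S)$ is a rigid $\Q$-linear tensor category in which, for a smooth projective $Y/S$ of relative dimension $d$ and a projector $p$, the dual of $(Y,p,m)$ is $(Y,{}^tp,d-m)$, the evaluation and coevaluation being induced by the relative diagonal $\Delta_{Y/S}$; in particular $h(Y/S)^\vee\simeq h(Y/S)(d)$, which is exactly where the Tate twist in the statement comes from. Applying this with $Y=A$ and $d=g$ gives
\[ h^{2g-i}(A/S)^\vee=(A,{}^t\Pi_{2g-i},g),\qquad h^{i}(A/S)(g)=(A,\Pi_i,g), \]
so the theorem is equivalent to the assertion that ${}^t\Pi_{2g-i}=\Pi_i$ in $\corr{0}{S}{A}{A}$ for all $i$; indeed, once this is known the two motives displayed above are literally equal.

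For the identity ${}^t\Pi_i=\Pi_{2g-i}$ I would argue as follows. Since $[\Delta_{A/S}]$ is the identity correspondence and $\sum_k\Pi_k=[\Delta_{A/S}]$ with $[{}^t\Gamma_{[n]}]\circ\Pi_k=n^k\Pi_k$ by Theorem \ref{uniquedec}, we get
\[ [{}^t\Gamma_{[n]}]=[{}^t\Gamma_{[n]}]\circ[\Delta_{A/S}]=\sum_{k=0}^{2g}[{}^t\Gamma_{[n]}]\circ\Pi_k=\sum_{k=0}^{2g}n^{k}\Pi_k, \]
and transposing, $[\Gamma_{[n]}]=\sum_{k}n^{k}\,{}^t\Pi_k$. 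On the other hand, composing $[{}^t\Gamma_{[n]}]\circ\Pi_k=n^k\Pi_k$ on the left with $[\Gamma_{[n]}]$ and using the projection formula for the isogeny $[n]\colon A\to A$, which is finite flat of degree $n^{2g}$, in the form $[\Gamma_{[n]}]\circ[{}^t\Gamma_{[n]}]=\deg([n])\cdot[\Delta_{A/S}]=n^{2g}[\Delta_{A/S}]$, one gets $n^{2g}\Pi_k=n^{k}\,[\Gamma_{[n]}]\circ\Pi_k$, hence $[\Gamma_{[n]}]\circ\Pi_k=n^{2g-k}\Pi_k$ for $n\neq 0$, and therefore also $[\Gamma_{[n]}]=\sum_k n^{2g-k}\Pi_k=\sum_{j}n^{j}\Pi_{2g-j}$. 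Comparing the two expressions for $[\Gamma_{[n]}]$ yields $\sum_{j=0}^{2g}n^{j}\bigl({}^t\Pi_j-\Pi_{2g-j}\bigr)=0$, and since this holds for all $n$ (say $n=1,\dots,2g+1$) a Vandermonde argument forces ${}^t\Pi_j=\Pi_{2g-j}$ for every $j$. Together with the reduction above this gives $h^{2g-i}(A/S)^\vee=(A,{}^t\Pi_{2g-i},g)=(A,\Pi_i,g)=h^i(A/S)(g)$.

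The only points needing care are bookkeeping: getting the rigid–duality formula $(Y,p,m)^\vee=(Y,{}^tp,d-m)$ right for relative motives — in particular that it is the relative dimension $g$, and not $\dim A$, that appears in the twist — and verifying the cycle-level identity $[\Gamma_{[n]}]\circ[{}^t\Gamma_{[n]}]=n^{2g}[\Delta_{A/S}]$ (equivalently $[n]_*[n]^*=\deg[n]$), which over $\C$ presents no difficulty since the relevant intersection of graphs is transverse. Everything else is formal, and this is of course already contained in \cite{den-mur}; I would simply reproduce the argument.
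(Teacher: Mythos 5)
Your argument is correct, but it is worth saying up front that the paper does not actually prove this statement: its ``proof'' is a citation to K\"unnemann \cite[3.1.2]{motives-kuenne}, so you are supplying the content that the paper outsources. Your route --- reduce, via the rigid tensor structure $(A,p,m)^\vee=(A,{}^tp,g-m)$, to the cycle-level identity ${}^t\Pi_i=\Pi_{2g-i}$, and then derive that identity from the eigenvalue property of Theorem \ref{uniquedec} together with $[\Gamma_{[n]}]\circ[{}^t\Gamma_{[n]}]=n^{2g}[\Delta_{A/S}]$ and a Vandermonde argument --- is essentially the standard one, and it checks out: the composite you need is proper and multiplicity-free, and $2g+1$ nonzero values of $n$ suffice to isolate each ${}^t\Pi_j-\Pi_{2g-j}$. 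The one place where care is genuinely required, beyond the bookkeeping you flag, is the order of composition: with the usual convention $\beta\circ\alpha=p_{13*}(p_{12}^*\alpha\cdot p_{23}^*\beta)$ one has $[\Gamma_{[n]}]\circ[{}^t\Gamma_{[n]}]=n^{2g}[\Delta_{A/S}]$ as you use it, whereas the opposite composite ${}^t\Gamma_{[n]}\circ\Gamma_{[n]}$ equals the sum of the graphs of translation by the $n$-torsion points, which is \emph{not} a multiple of the diagonal in $\qchow{g}{A\times_S A}$; so the argument would collapse under the wrong convention, and you should state the convention explicitly. For comparison, K\"unnemann obtains the same transpose identity ${}^t\Pi_i=\Pi_{2g-i}$ from the explicit Fourier--Mukai description of the Deninger--Murre projectors (the Fourier transform intertwines transposition with the degree shift $i\mapsto 2g-i$); your derivation is more elementary in that it uses only the characterizing property of Theorem \ref{uniquedec} and not the construction of the $\Pi_i$.
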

\begin{proof}
\cite[3.1.2]{motives-kuenne}
\end{proof}

\section{Kottwitz's compact Shimura surfaces and their modular interpretation}\label{sec:surface}

For everything in this section see
\cite{harris-taylor} and \cite{kottwitz2}, where unitary groups of arbitrary signature are treated. We have followed Clozel's convention in \cite{clozel} to call projective Shimura varieties constructed in the following  particular way Kottwitz's Shimura varieties. It should be noted though that a vanishing theorem for these varieties coming from signature $(2,1)$ was obtained much earlier by Rapoport and Zink in \cite{rapoport-zink}.\\

Let $E/\Q$ be an imaginary quadratic extension. Denote by $\A_f$ the finite $\Q$-adeles. Let $(D,*)$ be a division algebra of dimension 9 over $E$ with center $E$ together with an involution $*$  which induces on $E$ the non-trivial automorphism of $E/\Q$, e.g. is of the second kind. Let $G$ be the $\Q$-group whose points in any commutative $\Q$-algebra $R$ are given by
$$G(R)=\{ x\in D\tensor_{\Q}R\mid xx^*\in R^*\}.$$

Let $h$ be an $\R$-algebra homomorphism
$$h:\C\ra D\tensor_{\Q}\R$$
such that $h(z)^*=h(\overline{z})$ for all $z\in\C$. Then the map
$$x\mapsto h(i)^{-1}x^* h(i)$$
is an involution of $D\tensor_{\Q}\R$. Assume that this involution is positive. By abuse of notation we will denote the restriction of $h$ to $\C ^*$ also by $h$. This gives us a homomorphism of $\R$-groups
$$h:\C^*\ra\G(\R).$$
Let $X_{\infty}$ denote the $G(\R)$-conjugacy class of $h$. We thus get
$$X_{\infty}=G(\R)/K_{\infty},$$
where $K_{\infty}$ is the centralizer of $h$ in $G(\R)$. Kottwitz showed in \cite{kottwitz2} (Lemma 4.1) that the pair $(G,X_{\infty})$ satisfies Deligne's  conditions (2.1.1.1)--(2.1.1.5) of \cite{deligne2} and therefore gives rise to a Shimura variety $S_K$ for every compact open subgroup $K\subset G(\A_f)$. It can be shown that $G$ is an inner form of the unitary similitude  group $GU(2,1)$ of signature (2,1). We also assume $K$ small enough so that $S_K$ is smooth.
Furthermore it is shown in \cite{kottwitz2} that $S_K$ is projective.

It follows from \cite{deligne3} that $S_K$ is the moduli space for
certain isogeny classes of abelian varieties with polarization, endomorphisms and level structure. More explicitly it is shown in \cite{kottwitz2}, \S 5 that $S_K$ is the moduli space for quadruples $(A,\lambda, i, \overline{\eta})$ satisfying Kottwitz's determinant condition (loc.cit.). Here $A$ denotes an abelian variety of dimension 9 with multiplication by $D$, $\lambda$ denotes a polarization, $i:D\hookrightarrow End^0 (A)$ a $*$-homomorphism
 and finally $\overline{\eta}$ a level structure. We use the standard notation $End^0 (A)=End(A)\tensor\Q$.

\section{A general theorem}\label{sec:general}

In this section we prove theorem \ref{ghm2} below.
We restrict ourself to the situation needed in the case described in section \ref{sec:surface}, namely a compact one, even though the proof goes through in the non-compact case. Yet we need to weaken the assumptions on monodromy of \cite{gordon-hanamura-murre-iii} since these assumptions are false in our case (and most other cases of PEL-Shimura varieties).
 Generally the prerequisites of theorem 1.3. of \cite{gordon-hanamura-murre-iii} are too strong even in the compact case, so there is very little hope to being able to use it for non-compact Shimura varieties beyond the ones studied in \cite{gordon-hanamura-murre-iii}. See \cite{we} for a non-trivial low-dimensional case where the prerequisites of theorem 1.3. of \cite{gordon-hanamura-murre-iii} fail.

The following two lemmas will be needed in the proof of theorem \ref{ghm2}.

\begin{lemma}\label{lemma:proj}
Let $X$ be a projective variety over an algebraically closed field $k$, then there is a functor
$$\pi_{*}: CH\mathcal{M}(X)\ra CH\mathcal{M}(k)$$
where $\pi_{*}(\mathcal{A}/X,r,P)=(\mathcal{A},r,P)$ and the induced maps on $\op{Hom}$ groups are $CH_{*}(\mathcal{A}\times_X \mathcal{A'})\ra CH_{*}(\mathcal{A}\times \mathcal{A'}).$
\end{lemma}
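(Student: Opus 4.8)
The plan is to construct the functor $\pi_*$ directly from the structure morphism $\pi : X \to \op{Spec} k$ and to reduce everything to the behaviour of Chow groups of relative products under the forgetful map.

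First I would recall the setup. An object of $CH\mathcal{M}(X)$ is a triple $(\mathcal{A}/X, r, P)$ with $\mathcal{A} \to X$ smooth projective, $r \in \Z$ a Tate twist, and $P$ a projector in $\corr{0}{X}{\mathcal{A}}{\mathcal{A}} = \qchow{g}{\mathcal{A} \times_X \mathcal{A}}$ where $g$ is the relative dimension. Since $X$ is projective over $k$, the composite $\mathcal{A} \to X \to \op{Spec} k$ is projective, so $\mathcal{A}$ is itself a smooth projective $k$-variety and $(\mathcal{A}, r, P)$ makes sense as an object of $CH\mathcal{M}(k)$, provided $P$ is still a projector there. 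So the first step is to see that the natural map
$$
\qchow{g}{\mathcal{A} \times_X \mathcal{A}} \ra \qchow{\dim X + g}{\mathcal{A} \times_k \mathcal{A}}
$$
(push-forward along the closed immersion $\mathcal{A} \times_X \mathcal{A} \hookrightarrow \mathcal{A} \times_k \mathcal{A}$, which is regular since $X$ is smooth) is a ring homomorphism for the composition-of-correspondences product, and sends $[\Delta_{\mathcal{A}/X}]$ to $[\Delta_{\mathcal{A}/k}]$. This is the technical heart: one must check that composition of relative correspondences over $X$ is compatible with composition of absolute correspondences over $k$ under this map. The point is that the fibre product $(\mathcal{A} \times_X \mathcal{A}) \times_{\mathcal{A}} (\mathcal{A} \times_X \mathcal{A})$ sits inside $(\mathcal{A} \times_k \mathcal{A}) \times_{\mathcal{A}} (\mathcal{A} \times_k \mathcal{A})$ and the relevant intersection-theoretic projection formulas match up; smoothness of $X$ guarantees the excess intersection classes behave correctly. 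Granting this, $P \mapsto P$ sends projectors to projectors and the forgetful assignment on objects is well-defined.

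Next I would define $\pi_*$ on morphisms. A morphism $(\mathcal{A}/X, r, P) \to (\mathcal{A}'/X, r', P')$ is an element of $P' \circ \corr{r'-r}{X}{\mathcal{A}}{\mathcal{A}'} \circ P$, i.e. a relative correspondence of the appropriate degree, cut down by the projectors. Applying the same push-forward $\qchow{*}{\mathcal{A} \times_X \mathcal{A}'} \to \qchow{*}{\mathcal{A} \times_k \mathcal{A}'}$ lands it in $P' \circ \corr{*}{k}{\mathcal{A}}{\mathcal{A}'} \circ P$, which is exactly $\Hom_{CH\mathcal{M}(k)}((\mathcal{A},r,P),(\mathcal{A}',r',P'))$ — note the degree shift by $\dim X$ in the correspondence degree is absorbed because the Tate twists $r, r'$ are unchanged and the codimension on the absolute product exceeds that on the relative product by exactly $\dim X$. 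Functoriality (compatibility with composition and identities) follows from the ring-homomorphism property established in the previous step, applied now to the three-fold composite $\mathcal{A}, \mathcal{A}', \mathcal{A}''$; the identity $[\Delta_{\mathcal{A}/X}] \mapsto [\Delta_{\mathcal{A}/k}]$ handles identity morphisms. The stated formula for the induced map on $\Hom$ groups, $CH_*(\mathcal{A} \times_X \mathcal{A}') \to CH_*(\mathcal{A} \times_k \mathcal{A}')$, is then precisely this push-forward.

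I expect the main obstacle to be the compatibility of composition of correspondences under the forgetful map — verifying that the intersection product defining $\gamma' \circ \gamma$ computed inside $\mathcal{A} \times_X \mathcal{A}'$ agrees, after push-forward, with the one computed inside $\mathcal{A} \times_k \mathcal{A}'$. This is a standard but slightly delicate diagram chase with Gysin maps, using that $X/k$ is smooth (so that the diagonal $X \hookrightarrow X \times_k X$ is regularly embedded and the relative products embed regularly into the absolute ones) together with the base-change and projection formulas for refined Gysin homomorphisms à la Fulton. Once this compatibility is in hand, everything else — well-definedness on objects, on morphisms, functoriality — is formal. I would remark that $k$ being algebraically closed is not essential here; it is assumed only to match the application, and the construction works for any perfect (indeed any) base field.
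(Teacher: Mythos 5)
Your construction is correct and is essentially the argument behind the result the paper invokes: the paper's entire proof of this lemma is a citation of Proposition~1.1 of \cite{gordon-hanamura-murre-i}, and your pushforward along the regular closed immersion $\mathcal{A}\times_X\mathcal{A}'\hookrightarrow\mathcal{A}\times_k\mathcal{A}'$, with the compatibility of composition of correspondences (via refined Gysin maps and the projection formula) as the key verification and the $\dim X$ shift in codimension absorbed into the correspondence degree, is precisely the content of that proposition. The only caveat worth recording is that the lemma's statement omits smoothness of $X$, which your argument uses; this is harmless because the paper's definition of $CH\mathcal{M}(X)$ already assumes a smooth base and the Shimura surfaces in question are smooth.
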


\begin{proof}
This is part of proposition 1.1. of \cite{gordon-hanamura-murre-i}.
\end{proof}

\begin{lemma}\label{lemma:fund}
Let $X$, $S$ be quasi-projective varieties over an algebraically closed field $k$, with $X$ smooth over $k$, and $p:X\to S$ a projective map.
\begin{enumerate}

\item[(a)] Giving a projector $\Pi\in CH_{dim X}(X\times_S X)$ and an isomorphism in $CH\mathcal{M}(S)$
$$f:(X,\Pi,0)\ra \oplus^m (S,\Delta (S),-q)$$
is equivalent to giving elements
$$f_1,\ldots,f_m\in CH_{dim S+q}(X),\quad g_1,\ldots,g_m\in CH_{dim S-q}(X)$$
subject to the condition $p_*(f_i\cdot g_j)=\delta _{ij}[S].$ Here $f_i\cdot g_j$ is the intersection product in $X$, and $[S]\in CH_{dim S}(S)$ is the fundamental class.

\item[(b)] If $(X,P,0)\in CH\mathcal{M}(S)$ and $\Pi$ is a constituent of $P$ (i.e. $P\circ\Pi =\Pi\circ P =\Pi $) then we can take the above $f_i$ and $g_j$ which moreover satisfy $f_i\circ P =f_i$ and $P\circ g_j = g_j$. Conversely, if we have such $f_i$ and $g_j$, then the corresponding $\Pi$ is a constituent of $P$.

\end{enumerate}

\end{lemma}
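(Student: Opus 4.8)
The plan is to unwind the definition of morphisms in $CH\mathcal{M}(S)$ and translate everything into intersection-theoretic data on $X$. Recall that $\Hom_{CH\mathcal{M}(S)}((X,\Pi,0),(S,\Delta(S),-q))$ is by definition $\Delta(S)\circ\corr{-q}{S}{X}{S}\circ\Pi = \qchow{\dim S - q + \dim X - \dim X}{X\times_S S}\circ\Pi$ — more carefully, a degree $-q$ correspondence from $X$ to $S$ lives in $\qchow{\dim S - q}{X\times_S S}$, and since $X\times_S S \cong X$ this is just $\qchow{\dim S - q}{X} = CH_{\dim X - \dim S + q + (\dim S - q)}{\ldots}$; the cleanest bookkeeping is via dimensions, giving $CH_{\dim S + q}(X)$ for a morphism $(X,\Pi,0)\to (S,\Delta(S),-q)$ and $CH_{\dim S - q}(X)$ for one in the opposite direction. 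So first I would set up this dictionary precisely: a map $f\colon (X,\Pi,0)\to\oplus^m(S,\Delta(S),-q)$ is an $m$-tuple $(f_1,\dots,f_m)$ with $f_i\in CH_{\dim S+q}(X)$ satisfying $f_i\circ\Pi = f_i$, and a candidate inverse $g$ is an $m$-tuple $(g_1,\dots,g_m)$ with $g_j\in CH_{\dim S-q}(X)$ and $\Pi\circ g_j = g_j$.

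Next I would compute the two composites $f\circ g$ and $g\circ f$ in terms of the intersection data. The composite $g\circ f\colon (X,\Pi,0)\to (X,\Pi,0)$ is a self-correspondence of $X$ over $S$, and I would check that it equals $\sum_i g_i * f_i$, where $*$ is the correspondence product $CH(X\times_S S)\times CH(S\times_S X)\to CH(X\times_S X)$; concretely, using that the middle variety is $S$ and pullback along $p\times p$, this composite is the cycle $(p\times p)^!$ of something, and the statement $f$ is an isomorphism with inverse $g$ becomes precisely $\sum_i g_i\circ f_i = \Pi$ on one side. On the other side, $f\circ g\colon \oplus^m(S,\Delta(S),-q)\to\oplus^m(S,\Delta(S),-q)$ is an $m\times m$ matrix of self-correspondences of $S$ of degree $0$, i.e.\ elements of $CH_{\dim S}(S\times_S S)=CH_{\dim S}(S)$, and the $(i,j)$ entry is $p_*(f_i\cdot g_j)$ by the projection formula applied to the graph of $p$. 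Requiring $f\circ g = \op{id}$ is then exactly the condition $p_*(f_i\cdot g_j)=\delta_{ij}[S]$. The remaining point for part (a) is that once $f\circ g = \op{id}$ holds, the other relation $g\circ f = \Pi$ is automatic: $g\circ f$ is an idempotent (since $f\circ g=\op{id}$) self-correspondence lying in $\Pi\circ CH\circ\Pi$, and one checks it acts as the identity on the motive $(X,\Pi,0)$ — more efficiently, a standard categorical argument shows that a right inverse to a split injection between a direct sum of copies of a unit-type object and a summand forces $g\circ f$ to be the defining projector; I would spell this out using that $(S,\Delta(S),-q)$ is invertible (it is the Tate twist of the unit). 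The conditions $f_i\circ\Pi=f_i$ and $\Pi\circ g_j=g_j$ are built into being morphisms out of / into $(X,\Pi,0)$, so they cost nothing.

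For part (b), given that $\Pi$ is a constituent of $P$ — i.e.\ $P\circ\Pi=\Pi\circ P=\Pi$ — I would replace the $f_i,g_j$ produced in (a) by $f_i' = f_i\circ P$ and $g_j' = P\circ g_j$. Since $f_i = f_i\circ\Pi = f_i\circ\Pi\circ P$ wait — one checks $f_i\circ P = f_i\circ\Pi\circ P = f_i\circ\Pi = f_i$ using $\Pi\circ P = \Pi$, hmm that needs $\Pi P = \Pi$; we have $P\Pi = \Pi P = \Pi$, so $f_i\circ P = (f_i\circ\Pi)\circ P = f_i\circ(\Pi\circ P) = f_i\circ\Pi = f_i$, and similarly $P\circ g_j = g_j$; so in fact the original $f_i,g_j$ already satisfy the extra conditions, which is the content of the first half of (b). Conversely, if $f_i,g_j$ satisfy $f_i\circ P = f_i$, $P\circ g_j = g_j$ together with $p_*(f_i\cdot g_j)=\delta_{ij}[S]$, then the associated projector $\Pi = \sum_i g_i\circ f_i$ satisfies $P\circ\Pi = \sum_i (P\circ g_i)\circ f_i = \sum_i g_i\circ f_i = \Pi$ and likewise $\Pi\circ P = \Pi$, so $\Pi$ is a constituent of $P$.

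The main obstacle I anticipate is purely organizational rather than deep: keeping the dimension/codimension indices and the various identifications $X\times_S S\cong X$, $S\times_S S\cong S$ straight, and correctly identifying the correspondence composition with the honest intersection-and-pushforward operations (this is where the projection formula for $p$ and flat/proper base change in the fiber product $X\times_S X$ enter). There is no geometric input needed beyond $p$ being projective (hence proper, so pushforwards exist) and $X$ smooth (so intersection products on $X$ are defined); the whole lemma is a bookkeeping translation of the definition of $CH\mathcal{M}(S)$, and indeed this is essentially a restatement of the corresponding lemma in \cite{gordon-hanamura-murre-i, gordon-hanamura-murre-iii}.
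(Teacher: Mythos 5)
Your overall strategy is the right one, and it is essentially the proof of the result you are asked to reproduce: the paper itself gives no argument here, it simply cites Lemma~4 of Gordon--Hanamura--Murre, whose proof is exactly the dictionary you describe (morphisms $(X,\Pi,0)\to(S,\Delta(S),-q)$ are classes on $X\times_S S\cong X$, morphisms the other way are classes on $S\times_S X\cong X$, the composite $f\circ g$ has $(i,j)$ entry $p_*(f_i\cdot g_j)\in CH_*(S)$, the composite $g\circ f$ is taken as the definition of $\Pi$, and part (b) is the formal manipulation with $P\circ\Pi=\Pi\circ P=\Pi$ that you carry out correctly).

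There is, however, one concrete slip, and it sits precisely in the place you dismiss as "purely organizational": the dimension bookkeeping is inconsistent as you have written it. If $f_i\in CH_{\dim S+q}(X)$ and $g_j\in CH_b(X)$, then $f_i\cdot g_j\in CH_{(\dim S+q)+b-\dim X}(X)$ and $p_*(f_i\cdot g_j)$ lies in the same dimension; for this to be a multiple of $[S]\in CH_{\dim S}(S)$ one needs $b=\dim X-q$, not $b=\dim S-q$. (Check against the unit example $S=\op{Spec}k$, $q=0$: there $g=[X]\in CH_{\dim X}(X)$, and against the application in the proof of Theorem~\ref{ghm2}, where $g_j\in CH^r(\mathcal{A})=CH_{\dim\mathcal{A}-r}(\mathcal{A})$.) The "$CH_{\dim S-q}(X)$" in the statement is a typo, and your "derivation" of it signals that you did not actually perform the computation of $p_*(f_i\cdot g_j)$ that your proof hinges on; a complete write-up must get this index right. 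A smaller point: your worry about whether $g\circ f=\Pi$ is "automatic" dissolves once the logical structure is stated cleanly --- in the direction from $(f_i,g_j)$ to $(\Pi,f)$ one simply \emph{defines} $\Pi:=\sum_i g_i\circ f_i$ and checks it is idempotent using $f\circ g=\op{id}$, while in the other direction $g$ is the two-sided inverse of the given isomorphism, so no categorical argument about right inverses is needed.
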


\begin{proof}
This is Lemma 4 of \cite{gordon-hanamura-murre-iii}. See pp. 144 of \cite{gordon-hanamura-murre-iii} for its proof.
\end{proof}

\subsection{The Hodge structure and Hodge group of an abelian variety}
Instead of working with fundamental groups and monodromy representations, as it was done in previous work on this subject, we will systematically use special Mumford-Tate groups. Note that we will use Mumford's definition of these groups (see \cite{mumford}) (and not Deligne's generalized definition as defined in \cite{deligne}). We will now recall all necessary definitions.\\
 Let $\mathcal{A}_t$ be an abelian variety over $\C$ (a generic fibre of $\mathcal{A}$ in our case). Then the Betti homology group $V=H_1^B(\mathcal{A}_t,\Q)$ carries a Hodge
structure of weight $-1$, and type $\{(-1,0),(0,-1)\}$. This means that there
exists a canonical decomposition:
$$V\otimes\mathbb C=V^{-1,0}\oplus V^{0,-1}$$
satisfying
$$\overline{V^{-1,0}}=V^{0,-1},$$
More generally Hodge structures of any weight and type are
described by an action $h$ of the algebraic group $\mathbb S=\res_{\mathbb C/\mathbb R}\mathbb G_m$ (often called Deligne torus in the context of Shimura varieties) on the underlying real vector space
$$V\otimes\mathbb R.$$ This means that $z$ acts as
$z^{-p}\overline{z}^{-q}$ on the $V^{p,q}$ piece (the (-)-normalization
arises as $V^{-1,0}$ is the tangent space of $A$).\\

Let $(V,h: \mathbb S\ra\GL (V_{\R}))$ be a rational pure Hodge structure of fixed weight.

\begin{defi}
The {\sl Mumford-Tate} group $MT(V,h)$ of a rational Hodge structure $(V,h)$ is the smallest algebraic $\mathbb Q$-subgroup of
$GL(V)$ such that $MT(V,h)\times\mathbb R$ contains the image of $h$.
\end{defi}

Let us denote by $\mathbb S^1$ the kernel of the norm from $\mathbb S$ to
$\mathbb G_m$, and let us write $h^1$ for the restriction of $h$ to $\mathbb S^1$.

\begin{defi}
The {\sl Hodge} group (often called {\sl special Mumford-Tate group}) $\hod (V,h)$ of a rational Hodge structure $(V,h)$ is the smallest algebraic $\mathbb Q$-subgroup of
$GL(V)$ such that $\hod(V,h)\times\mathbb R$ contains the image of $h^1$.
\end{defi}

\begin{rem}\label{rem}
For a rational pure Hodge structure of fixed weight we have $$MT(V,h)=\mathbb G_m\cdot \hod(V,h).$$
\end{rem}

\subsection{Proof of theorem \ref{ghm2}}

We are now ready to prove the following theorem.

\begin{theorem}
\label{ghm2} Let $X$ be the $\C$-valued points of a smooth and projective PEL-Shimura variety and let $p:\mathcal{A}\to X$ be the universal abelian scheme over $X$. Assume the following conditions to hold:

\begin{enumerate}

\item The scheme $\mathcal{A}/X$ has a relative Chow-K\"{u}nneth decomposition.
\item $X$ has a Chow-K\"{u}nneth decomposition over $\C$.
\item If $t$ is a general point of $X$ , the natural map.
\[CH^r(\mathcal{A})\to H^{2r}_B(\mathcal{A}_t(\mathbb{C}),\mathbb{Q})^{\hod (\mathcal{A}_t)}\]
is surjective for $0\le r\le d=\dim \mathcal{A}-\dim X$.
\item  Let $\rho$ be an irreducible, non-constant
representation of ${\hod (\mathcal{A}_t)}$ and $\mathcal{V}$ the
corresponding local system on $X$.

Then the cohomology $H^q(X,\mathcal{V})$ vanishes if
$q\neq\dim X$.
\end{enumerate}
Under these assumptions $\mathcal{A}$ has a Chow-K\"{u}nneth
decomposition over $\C$.

\end{theorem}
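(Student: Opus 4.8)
The plan is to combine the relative Chow--K\"unneth decomposition $\Delta_{\mathcal{A}/X}=\sum_{i=0}^{2g}\Pi_i$ (assumption (1)) with the Chow--K\"unneth decomposition of the base $X$ (assumption (2)) by splitting each relative piece $h^i(\mathcal{A}/X)=(\mathcal{A}/X,\Pi_i)$ into a direct sum of Tate twists of motives pulled back from $X$, and then assembling a genuine (absolute) Chow--K\"unneth decomposition of $\mathcal{A}$ via the functor $\pi_*\colon CH\mathcal{M}(X)\to CH\mathcal{M}(k)$ of Lemma~\ref{lemma:proj}. The output of applying $\pi_*$ to $h^i(\mathcal{A}/X)$ should be a motive whose cohomology is $H^*(X,R^ip_*\Q)$, which by the Leray spectral sequence contributes $H^{q}(X,R^ip_*\Q)$ to $H^{q+i}(\mathcal{A})$; the point is to control this group-theoretically so that the spectral sequence degenerates and the total decomposition is by weight.

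First I would use assumption (3): for a general $t$, the local system $R^ip_*\Q$ decomposes, as a representation of $\hod(\mathcal{A}_t)$ (equivalently, after passing to the associated local system, under the monodromy which is contained in $\hod(\mathcal{A}_t)$ for a PEL Shimura variety), into isotypic components $\bigoplus_\rho \mathcal{V}_\rho$, where $\rho$ runs over irreducible representations. The trivial representation contributes a constant system, whose contribution to $H^{2r}_B(\mathcal{A}_t)^{\hod(\mathcal{A}_t)}$ is exactly the invariants, and assumption (3) says these invariant classes are algebraic, coming from $CH^r(\mathcal{A})$. Via Lemma~\ref{lemma:fund}(a),(b) these algebraic classes $f_i$ together with dual classes $g_j$ (built using the polarization and the Poincar\'e-duality of Theorem~\ref{ghm2}'s predecessor for $h^i(A/S)$) cut out, inside each $h^i(\mathcal{A}/X)$, a sub-object isomorphic to a sum of Tate twists $\bigoplus(X,\Delta(X),-q)$; this handles the ``constant part'' of each $\Pi_i$. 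For the non-constant part, assumption (4) forces $H^q(X,\mathcal{V}_\rho)=0$ for $q\neq \dim X$, so the corresponding summand of $h^i(\mathcal{A}/X)$ contributes cohomology only in a single total degree, namely $i+\dim X$.

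Next I would bookkeep degrees. Writing $n=\dim X$ and $g=\dim\mathcal{A}-\dim X$, the contribution of the constant part of $\Pi_i$ lands, after $\pi_*$ and composing with the Chow--K\"unneth projectors $\pi^X_0,\dots,\pi^X_{2n}$ of $X$, in degrees $i,i+1,\dots,i+2n$; the contribution of the non-constant part lands purely in degree $i+n$. One then defines $\pi_k^{\mathcal{A}}$ for $0\le k\le 2(n+g)$ by collecting: from the constant part of each $\Pi_i$, the piece $(\mathcal{A},\ \Pi_i\circ p^*\pi^X_{k-i}\circ\Pi_i,\ *)$ built through Lemma~\ref{lemma:fund}; and from the non-constant part of $\Pi_{k-n}$, the whole middle piece. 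Orthogonality of the $\pi_k^{\mathcal{A}}$ follows from orthogonality of the $\Pi_i$, orthogonality of the $\pi^X_j$, and the orthogonality of constant versus non-constant isotypic projectors (which are themselves algebraic idempotents in $CH\mathcal{M}(X)$ since they are realized by correspondences built from the $f_i,g_j$ and the relative projectors). That $\sum_k\pi_k^{\mathcal{A}}=[\Delta_{\mathcal{A}}]$ and that $\pi_k^{\mathcal{A}}$ realizes $H^k(\mathcal{A})$ is then read off the (now degenerate) Leray spectral sequence together with the Chow--K\"unneth property on $X$.

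The main obstacle I expect is the non-constant middle part: one must check that the summand of $h^i(\mathcal{A}/X)$ complementary to the constant (Tate) part is, as a Chow motive over $X$, sent by $\pi_*$ to something that contributes cohomology \emph{only} in degree $i+n$ — i.e., that assumption (4) really does kill all higher cohomology of \emph{every} irreducible non-constant $\rho$ occurring in $\bigwedge^\bullet$ of the standard representation (all the $R^ip_*\Q$ and their summands), and that there is no extension/convergence subtlety in the motivic Leray spectral sequence preventing the degeneration from being realized by actual Chow-theoretic projectors. Concretely, the delicate point is to produce the projector onto this middle summand as a genuine algebraic cycle (not merely a cohomological one): here one uses that $\Pi_i$ is already algebraic and that subtracting off the algebraic constant part (built via Lemma~\ref{lemma:fund}) leaves an algebraic complement, so the degeneration is automatic once the cohomological vanishing (4) is in place.
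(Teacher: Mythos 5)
Your proposal is correct and follows essentially the same route as the paper: split each even relative projector into a constant/algebraic constituent cut out via assumption (3) and Lemma~\ref{lemma:fund}, handle that part with the Chow--K\"unneth decomposition of $X$ (assumption (2)), and use the vanishing in assumption (4) applied to the invariant-free local systems $R^ip_*\Q$ (odd $i$) and $R^ip_*\Q/(R^ip_*\Q)^{\hod(\mathcal{A}_t)}$ (even $i$) to show the remaining transcendental and odd constituents are pure after pushing forward by $\pi_*$. The paper's proof is exactly this decomposition $P^{2r}=P^{2r}_{\mathrm{alg}}+P^{2r}_{\mathrm{trans}}$, with the same resolution of the delicate point you flag (the complement of an algebraic constituent of an algebraic projector is again an algebraic projector, by Lemma~\ref{lemma:fund}(b)).
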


\begin{rem}
By general point in condition (3) we mean arising by base change from a generic point of a model of
$X$ over its function field.\\
The proof of the theorem will show that it is
sufficient to assume condition (4) for a very specific set of local systems $\mathcal{V}^i$.

Note that Condition (1) always holds by the above mentioned results of Deninger and Murre.
\end{rem}

\begin{proof}
By assumption (1) of the theorem there is a relative Chow-K\"unneth decomposition for $\mathcal{A}\to X$. If $d$ is the relative dimension of $\mathcal{A}$ over $X$, denote by $P^0,\ldots ,P^{2d}$ the relative Chow-K\"unneth projectors for $\mathcal{A}/X$. We will decompose the even projectors $P^{2r}$ into an algebraic and a transcendental part.\\
Let $t\in X$ be generic. There is a non-degenerate pairing
$$H_B^j(\mathcal{A}_t,\Q)\times H_B^{2d-j}(\mathcal{A}_t,\Q)\ra\Q,\; j=0\ldots 2d .$$
Deligne showed in \cite{deligne} that $\hod (\mathcal{A}_t)$ acts semi-simply on $H_B^{*}(\mathcal{A}_t,\Q)$. In particular we get a non-degenerate pairing
$$H_B^{2r}(\mathcal{A}_t,\Q)^{\hod (\mathcal{A}_t)}\times H_B^{2d-2r}(\mathcal{A}_t,\Q)^{\hod (\mathcal{A}_t)}\ra\Q,\; r=0\ldots d .$$
Choose a base for $H_B^{2r}(\mathcal{A}_t,\Q)^{\hod (\mathcal{A}_t)}$ and a dual base for $H_B^{2d-2r}(\mathcal{A}_t,\Q)^{\hod (\mathcal{A}_t)}$. By assumption (3) we can lift the elements of the base and the elements of the dual base to elements
$$g_j\in CH^r(\mathcal{A}),\quad f_i\in CH^{d-r}(\mathcal{A}),$$ for $j=1,\ldots ,m=\op{dim}\; H_B^{2r}(\mathcal{A}_t,\Q)^{\hod (\mathcal{A}_t)}.$\\
Note that $P^{2r}\circ g_j=g_j \quad(*)$ since $g_j$ maps to $H_B^{2r}(\mathcal{A}_t,\Q)^{\hod (\mathcal{A}_t)}$ and $P^{2r}$ is the $2r$-th relative Chow-K\"unneth projector. We get $f_i\circ P^{2r}=f_i$ by dualizing: From $(*)$ we obtain $(P^{2r}\circ g_j)^{\vee}=g_j^{\vee}=f_j $. Furthermore $(P^{2r}\circ g_j)^{\vee}=f_j\circ (P^{2r})^{\vee}=f_j\circ (P^{2d-2r})$ by Lieberman's lemma.\\
By lemma \ref{lemma:fund} we can use the $\{f_i \},\{g_j \}_{i,j=1,\ldots ,dim H_B^{2r}(\mathcal{A}_t,\Q)^{\hod (\mathcal{A}_t)}}$ to construct constituents $P^{2r}_{alg}$ of $P^{2r},\;r=0,\ldots ,d$.\\
Set $P^{2r}_{trans}:=P^{2r}-P^{2r}_{alg}$.\\
By lemma \ref{lemma:proj} we know that $P^{2r-1}, P^{2r}_{alg}, P^{2r}_{trans}, r=0,\ldots ,d$ extend to absolute projectors of $\mathcal{A}/k$.
We now have to decide which of these projectors are pure, meaning which project to cohomology (of $\mathcal{A}$) of one single degree.\\

To this goal we will now consider the following local systems $\mathcal{V}^i$ on $X$, defined by
$$(\mathcal{V}^i)_t(=\left\{
\begin{array}{ll}
(R^{i}p_*\Q)_t & \textrm{if $i=2r-1, i=0,\ldots ,d$,}\\
(R^{i}p_*\Q )_t/(R^{i}p_*\Q)_t^{\hod (\mathcal{A}_t)} & \textrm{if $i=2r, i=0,\ldots ,d$,}\\
\end{array}\right.
$$

where by $(R^{i}p_*\Q)_t^{\hod (\mathcal{A}_t)}$ we denote the constant system with fiber $H_B^{i}(\mathcal{A}_t,\Q)^{\hod (\mathcal{A}_t)}$. Now for $i=2r-1$ we know that $(R^{i}p_*\Q)_t)^{\hod (\mathcal{A}_t)}$ vanishes as Hodge cycles are even. Clearly for $i=2r$, by construction, $(R^{i}p_*\Q)_t /(R^{i}p_*\Q)_t^{\hod (\mathcal{A}_t)}$ doesn't contain any $\hod (\mathcal{A}_t)$-invariants either. Thus $(\mathcal{V}^i)_t$ doesn't contain any $\hod (\mathcal{A}_t)$-invariants and by assumption (4) of our theorem we obtain that $H^{q}(X,\mathcal{V}^i ) =0 \textrm{ for }q\neq \op{dim} X.$

Now denote by $M^{i}_?$ the motive cut out by the projector $P^{i}_?$, where $?\in\{ \; alg, trans, \ldots\}$.\\
Recall that by construction (assumption (1)) $P^{2r-1}R^{\bullet}p_*\Q$ is pure of degree $2r-1,\; r=0,\ldots ,d$. But as we have just shown this means that $H^{q}(X, P^{2r-1}R^{\bullet}p_*\Q)=0 \textrm{ for }q\neq \op{dim} X.$ Hence $M^{2r-1}\; r=0,\ldots ,d$ is pure and $P^{2r-1}$ is an absolute projector.\\
The same argument works for $M^{2r}_{trans}$ since by definition we have $M^{2r}=M^{2r}_{alg} + M^{2r}_{trans}$ and by construction the $P^{2r}_{alg}Rp_*\Q$ contain all Hodge cycles for $\mathcal{A}_t$. We conclude that $P^{2r}_{trans} Rp_*\Q$ does not contain any Hodge cycles (i.e. any $\hod (\mathcal{A}_t)$-invariants). As above we get that $M^{2r}_{trans}$ is pure and $P^{2r}_{trans}$ is an absolute projector.\\
As for the $P^{2r}_{alg}$ we apply lemma \ref{lemma:fund} to obtain
an isomorphism (in $CH\mathcal{M}(X)$)
$$f:(\mathcal{A},P^{2r}_{alg},0)\ra \oplus^m (X,\Delta (X),-q).$$
By assumption (2) of our theorem each summand on the right possesses a Chow-K\"unneth decomposition. Via the isomorphism we get a decomposition of $(\mathcal{A},P^{2r}_{alg},0)$ into pure motives, hence a decomposition of $P^{2r}_{alg}$ into absolute Chow-K\"unneth projectors. This completes the proof of the theorem.

\end{proof}

\section{Proof of the Hodge cojecture for $\mathcal{A}_t$}\label{sec:hodge}
In this section we will prove the Hodge conjecture for the fibres $\mathcal{A}_t$ described in the previous section. The proof builds on work of Takeev and Ribet and extends the class of abelian varieties considered by Ribet in \cite{ribet}.\\
I wish to remark, that after putting this paper on \rm{xxx.lanl.gov},  \\
S.Abdulali, whom I thank very much for pointing this out to me,  told me that the case I treat here was already treated by K.Murty in \cite{murty}.
Our proof differs though, as we make use of the structure of the underlying Shimura variety.
\bigskip

Let $\mathcal{A}$ and $X$ be as in section \ref{sec:surface}.

As before let $V=H_1^B(\mathcal{A}_t,\Q)$. Denote by $\psi$ a non-degenerate form on $V$ coming from a polarization on $\mathcal{A}_t$.
We now need to define an additional group $\frak{G}$ associated to $V$.
\begin{defi}
Let $\frak{G}$ be the largest connected subgroup of $\GL _V$ which commutes with $End^0 (A)$ and which is contained in $\op{GSp}(V,\psi)$ for a fixed $\psi$.
\end{defi}

\begin{rem}
See \cite{ribet} for the fact that $\frak{G}$ is in fact independent of a choice of $\psi$. It is also shown there that $MT(A)\subset\frak{G}$.
\end{rem}
We now return to our particular situation
Let $E$ be an imaginary quadratic field and let $B$ be a central simple
division algebra of degree $3^2$ over $E$. Let $*$ be a positive involution of
the second kind.

We will prove a slight generalization of Ribet's Theorem 0 of \cite{ribet}.

\begin{theorem}\label{0}
Let $A$ be an abelian variety whose (rational) endomorphism algebra $End^0 (A)$ is a central division algebra with an involution of the second kind and whose center is an imaginary quadratic field. Suppose that the Mumford-Tate group $MT(A)$ equals the group $\frak{G}$ defined above. Then all powers $A^m \; (m\geq 1)$ of $A$ satisfy the $(1,1)$ criterion.
\end{theorem}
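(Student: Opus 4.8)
The plan is to follow Ribet's strategy from \cite{ribet} closely, adapting each step to the case where the center of $\End^0(A)$ is imaginary quadratic and $\End^0(A)$ has degree $3^2$ rather than a field or quaternion algebra. Recall that the $(1,1)$ criterion for $A^m$ asserts that every Hodge cycle on $A^m$ is a sum of products of divisor classes; by a standard argument (as in Ribet and in Murty \cite{murty}) it suffices to show that the ring of Hodge cycles on all powers $A^m$ is generated by divisor classes, and this in turn follows once one knows that the algebra of $\hod(A)$-invariants in the full tensor algebra $T(V \oplus V^\vee)$ is generated in degree corresponding to $H^2$. The first step is therefore to pin down $\hod(A)$ (equivalently $\frak{G}$, by hypothesis) as an explicit reductive group: since $\End^0(A)$ is a central division algebra of degree $3^2$ over the imaginary quadratic field $E$ with a positive involution of the second kind, the commutant condition together with containment in $\GSp(V,\psi)$ forces $\frak{G}$ to be (an inner form of) a unitary similitude group $\GU$ attached to $E/\Q$ and a $3$-dimensional Hermitian space over $\End^0(A)$; in particular $\hod(A)$ is, up to isogeny and center, a special unitary-type group whose standard representation is $V$ viewed as a module over $\End^0(A)$.

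Second, I would invoke Deligne's semisimplicity theorem (cited already in the proof of Theorem \ref{ghm2}) so that computing Hodge cycles on $A^m$ amounts to a problem in the invariant theory of the reductive group $G = \hod(A)$ acting on copies of its standard representation $W$ and dual $W^\vee$. The key representation-theoretic input is a first-fundamental-theorem-type statement: for a unitary group of this shape, all $G$-invariants in $W^{\otimes a} \otimes (W^\vee)^{\otimes b}$ are spanned by "complete contractions" built from the $E$-Hermitian pairing and — crucially, because $\dim_{\End^0(A)} W = 3$ is odd — possibly also from a determinant (volume) form on $W$. The determinant form corresponds to a Hodge class that need not be a product of divisors, and this is exactly where Ribet's original hypotheses have to be examined: one must check that for our $A$ the relevant determinant-type invariants either do not occur (because of the similitude factor / weights) or are themselves already expressible via divisors. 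I expect this to be the main obstacle, and the resolution should come from Remark \ref{rem} ($MT(A) = \G_m \cdot \hod(A)$) together with the precise similitude character, which obstructs the appearance of the pure determinant invariant in the correct weight, leaving only the Hermitian-contraction invariants.

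Third, once the invariant algebra is shown to be generated by the Hermitian-pairing contractions, I would identify these generators geometrically: the Hermitian form $\psi$ (and its $E$-linear refinements coming from the $\End^0(A)$-action) gives the polarization class, a divisor; the $\End^0(A)$-action on $A$ produces further divisor classes on $A \times A$ via graphs of endomorphisms; and contractions between different factors of $A^m$ are cup products of these. Thus every element of the invariant algebra — hence every Hodge cycle on every $A^m$, by semisimplicity — is a polynomial in divisor classes, which is precisely the $(1,1)$ criterion. Finally I would note that the step using "the structure of the underlying Shimura variety," alluded to in the remark before the theorem, enters in identifying $\frak{G}$ explicitly: because $A$ arises as a fibre of the universal family over Kottwitz's Shimura surface attached to $\GU(2,1)$, the generic Mumford–Tate group is the full group of the Shimura datum, so the hypothesis $MT(A) = \frak{G}$ is automatically satisfied for general $t$, and the group-theoretic computation above can be carried out with $G$ the derived group of $\GU(2,1)$-type, i.e. an inner form of $\SU(2,1)$ (or a product of such over places of $E$), for which the first fundamental theorem with the determinant correction is classical.
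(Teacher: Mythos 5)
Your overall strategy coincides with the paper's: identify $\frak{G}$ (hence, by hypothesis, $MT(A)$) with $\GU(2,1)$ from the division-algebra structure of $\End^0(A)$ and its involution of the second kind, pass to $\hod(A)$ via $MT(A)=\gm\cdot\hod(A)$ (Remark \ref{rem}), and reduce the $(1,1)$ criterion for all powers $A^m$ to the statement that the invariants of the Hodge group in $\bigwedge^*\bigl((V^\vee)^m\bigr)$ are generated in degree $2$. The paper then simply notes that this last computation is Ribet's CM-case argument taken verbatim (with the totally real field equal to $\Q$, so no decomposition of $V$ into the spaces $X_\sigma$ is needed); it does not redo the first-fundamental-theorem analysis you sketch. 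You also correctly isolate the one genuine danger point, the determinant-type invariants arising because the underlying Hermitian space is $3$-dimensional, and you correctly locate the cure in the central (similitude) character.

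Your closing sentence, however, undoes this: you propose to carry out the invariant theory with $G$ ``the derived group of $\GU(2,1)$-type, i.e.\ an inner form of $\SU(2,1)$.'' That version of the computation fails, and the conclusion would be false for it: $\bigwedge^3$ of the standard representation is $\SU(2,1)$-invariant, so one obtains a degree-$3$ invariant that is not a polynomial in the degree-$2$ contractions --- exactly the mechanism producing Weil's and Mumford's exceptional Hodge classes. The group to use is the full $\hod(A)=\U(2,1)$, which is what the hypothesis $MT(A)=\frak{G}=\GU(2,1)$ delivers: its one-dimensional center acts on $\bigwedge^3 V$ through a nontrivial character, so the determinant invariants you worry about do not occur among $\hod(A)$-invariants, and the only surviving invariant built from top exterior powers, $\bigwedge^3 W\otimes\bigwedge^3 W^\vee$, is already a polynomial in the pairing contractions. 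With that correction --- i.e.\ running the argument for $\U(2,1)$ rather than $\SU(2,1)$ --- your proposal is the paper's proof.
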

\begin{proof}
Ribet proved this theorem for the case when $End^0 (A)$ is a commutative field. His proof distinguishes between the totally real case and the CM-case. Our case is a direct adaptation of the CM case to the division algebras described above. The modifications go as follows (see pp. 531 of \cite{ribet}): as totally real field take $\Q$, replace the commutative field by the center of $End^0 (A)=B$
(which is the imaginary quadratic field $E$), note that since $B$ comes equipped with an involution of the second kind, the requirements on the action on $E$ holds. Just as shown in section \ref{sec:surface} we obtain a general unitary group $\op{GU}(2,1)$ of signature $(2,1)$. Using the assumption we get $MT(A)=\op{GU}(2,1)$. Because of \ref{rem} we get $Hod(A)=\op{U}(2,1)$. We do not need to decompose $V$ into spaces $X_{\sigma}$ (notation of \cite{ribet}) since we work over $\Q$. The proof that $\bigwedge _{\R}^* (V^{\vee})^m)^{\op{U}(2,1)}$ is generated by elements of degree 2 for each integer $m\geq 1$ is then exactly Ribet's.
\end{proof}

Now let $\mathcal{A}_t$ be a general fibre of the specific universal abelian scheme described in section \ref{sec:surface}. Then $End^0 (\mathcal{A}_t)=B$ of the type above.

\begin{theorem}
Let $\mathcal{A}_t$ is an abelian 9-manifold with $End^0 (\mathcal{A}_t)=B$ where $B$ be a central simple
division algebra of degree 9 over an imaginary quadratic field $E$ and equipped with a positive involution of the second kind. Suppose that the $E$-action on the tangent space of $\mathcal{A}_t$ is of type $(6,3)$. Then the Hodge conjecture holds for $\mathcal{A}_t$.

\end{theorem}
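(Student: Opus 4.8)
The plan is to deduce this statement from Theorem \ref{0} by verifying that its hypothesis, namely $MT(\mathcal{A}_t) = \frak{G}$, is satisfied for the general fibre $\mathcal{A}_t$ of the universal abelian scheme over Kottwitz's Shimura surface. First I would recall that the Hodge conjecture for an abelian variety follows from the $(1,1)$-criterion (Hodge cycles on all powers are generated in degree $2$ by divisor classes), so once Theorem \ref{0} applies we are done; the work is entirely in checking $MT(\mathcal{A}_t) = \frak{G}$. Since $MT(\mathcal{A}_t) \subseteq \frak{G}$ always holds (as noted in the Remark following the definition of $\frak{G}$), the real content is the reverse inclusion.

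To get $\frak{G} \subseteq MT(\mathcal{A}_t)$, I would use the modular interpretation from Section \ref{sec:surface}: the group $G$ attached to the PEL-datum is an inner form of $\GU(2,1)$, and for a general (Hodge-generic) point $t$ of the Shimura variety $S_K$ the Mumford-Tate group $MT(\mathcal{A}_t)$ is as large as the datum allows, i.e.\ it equals (the relevant form of) $G$ itself. This is the standard fact that on a Shimura variety of Hodge type the generic Mumford-Tate group coincides with the generic Mumford-Tate group of the defining Hodge structure, which here is $G \cong \GU(2,1)$. On the other hand $\frak{G}$, being the largest connected subgroup of $\GL_V$ commuting with $\Endq(\mathcal{A}_t) = B$ and landing in $\op{GSp}(V,\psi)$, is by a Schur's lemma / centralizer computation exactly $\GU(2,1)$ for the PEL-type at hand (the type $(6,3)$ of the $E$-action on the tangent space forces signature $(2,1)$ after dividing out the $3^2 = \dim_E B$ factor, as in the passage from type $(6,3)$ to signature $(2,1)$ used in the proof of Theorem \ref{0}). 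Matching these two descriptions gives $MT(\mathcal{A}_t) = \GU(2,1) = \frak{G}$ for general $t$, and then Theorem \ref{0} applies to $A = \mathcal{A}_t$.

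In more detail, the steps in order would be: (i) identify $V = H_1^B(\mathcal{A}_t,\Q)$ as a module over $B$ of the PEL-type, with polarization form $\psi$, and compute its centralizer in $\GL_V$, using that $B$ is a division algebra so $V$ is a free $B$-module of rank $1$ (dimension count: $\dim_\Q V = 18 = 2\cdot 9$); (ii) show the subgroup of this centralizer lying in $\op{GSp}(V,\psi)$ has identity component $\GU(2,1)$ — here the type $(6,3)$ enters to pin down the signature; (iii) invoke that for a generic point of the Shimura variety the special Mumford-Tate group is the full group $G = \GU(2,1)$ of the datum, so by Remark \ref{rem} the Mumford-Tate group is also $\GU(2,1)$; (iv) conclude $MT(\mathcal{A}_t) = \frak{G}$, apply Theorem \ref{0} to obtain the $(1,1)$-criterion for all powers $\mathcal{A}_t^m$, and hence the Hodge conjecture for $\mathcal{A}_t$.

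The main obstacle I expect is step (iii): one must know that the \emph{general} fibre $\mathcal{A}_t$ (in the precise sense that it is Hodge-generic in the Shimura variety, not merely the geometric generic fibre) has Mumford-Tate group equal to the whole group $G$ of the Shimura datum, rather than some proper reductive subgroup. This is where the structure of the Shimura variety is genuinely used — precisely the point the paper emphasizes as the difference from Murty's argument. One has to be careful that $G$ is an \emph{inner form} of $\GU(2,1)$ coming from the division algebra $D$, so the relevant group over $\Q$ is this inner form, and one checks that its derived group is $\Q$-simple and that the corresponding local system on $X$ has no extra invariants, so that the generic Hodge group is all of $\U(2,1)$; the remaining verifications (centralizer computation, signature bookkeeping) are routine linear algebra over $B$.
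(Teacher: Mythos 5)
Your proposal follows essentially the same route as the paper: reduce to Theorem \ref{0} by establishing $MT(\mathcal{A}_t)=\frak{G}$, identify $\frak{G}$ with $\GU(2,1)$ via the centralizer computation already carried out in the proof of Theorem \ref{0}, and obtain $MT(\mathcal{A}_t)=\GU(2,1)$ for a general fibre from the construction of the Shimura datum (Deligne's conditions). Your extra care at step (iii) — distinguishing the Hodge-generic point from the geometric generic fibre and noting the role of the inner form — is exactly the point the paper leans on, so the argument matches.
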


\begin{proof}
By the Hodge conjecture for degree 2 the Hodge conjecture for any variety $X$ holds if the $(1,1)$-criterion holds for $X$. By theorem \ref{0} we are thus reduced to showing ''$MT(\mathcal{A}_t)=\frak{G}$''. In the proof of theorem \ref{0} we showed that $\frak{G}=\op{GU}(2,1)$. Since we take $\mathcal{A}_t$ to be a general fibre of $\mathcal{A}\ra X$ and by the very construction of the underlying Shimura variety it follows from Deligne's  conditions (2.1.1.1)--(2.1.1.5) of \cite{deligne2} that
$MT(\mathcal{A}_t)=\op{GU}(2,1)$. We can then apply theorem \ref{0} which completes the proof (for $m=1$).
\end{proof}

\section{Cohomology}\label{sec:cohomology}

In this section we prove the following theorem.
\begin{theorem}
Let $\mathcal{V}^i$ denote the following local systems on $X$:
$$\mathcal{V}^i=\left\{
\begin{array}{ll}
R^{i}p_*\C & \textrm{if $i=2r-1, r=0,\ldots ,d$,}\\
R^{i}p_*\C /(R^{i}p_*\C)^{\hod (\mathcal{A}_t)} & \textrm{if $i=2r, r=0,\ldots ,d$,}\\
\end{array}\right.
$$
Then the cohomology groups $H^q(X,\mathcal{V}^i)$ vanish if
$q\neq 2$.
\end{theorem}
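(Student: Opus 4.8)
The cohomology $H^q(X,\mathcal{V}^i)$ is computed via matrix coefficients of automorphic representations of $G$, so the strategy is to reduce the vanishing statement to a known vanishing theorem for the cohomology of the arithmetic quotient $X = S_K$ attached to the group $G$, an inner form of $\GU(2,1)$. First I would use the fact that $X$ is a smooth compact quotient of the symmetric space of $\GU(2,1)$, i.e.\ $X \cong \Gamma \backslash \HH$ where $\HH$ is the complex $2$-ball and $\Gamma$ is a cocompact arithmetic lattice. Each local system $\mathcal{V}^i$ is associated to a representation of $\hod(\mathcal{A}_t)$, which the previous section has identified with $\U(2,1)$ (via $MT(\mathcal{A}_t)=\GU(2,1)$ and Remark \ref{rem}). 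Concretely, $V = H_1^B(\mathcal{A}_t,\Q)$ is the standard $3$-dimensional representation of $\U(2,1)$ (tensored with the relevant $E$-structure), so $R^i p_* \C \cong \bigwedge^i V^\vee$ as a local system, and $\mathcal{V}^i$ for $i$ even is the quotient by its subspace of $\U(2,1)$-invariants, while for $i$ odd it is all of $\bigwedge^i V^\vee$. The key point is then: by Matsushima's formula, $H^q(X,\mathcal{V})$ decomposes as a sum over automorphic representations $\pi$ of $G(\A)$ with $\pi_\infty$ contributing $(\mathfrak{g},K)$-cohomology $H^q(\mathfrak{g},K;\pi_\infty \otimes W)$, where $W$ is the finite-dimensional representation dual to the one defining $\mathcal{V}$.

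**Key steps, in order.** (i) Decompose each $\mathcal{V}^i$ into $G(\R)$-irreducible (equivalently $\U(2,1)$- or $\GU(2,1)$-irreducible) constituents; since $\hod(\mathcal{A}_t)=\U(2,1)$ is reductive this is a finite sum, and by construction every constituent of $\mathcal{V}^i$ is a \emph{non-trivial} irreducible representation (the trivial constituents in $R^i p_* \C$ having been removed in the even case, and being absent in the odd case because Hodge/invariant classes sit in even degree). (ii) Apply the Matsushima--Murakami decomposition to reduce $H^q(X,\mathcal{V})$ for each irreducible non-trivial $\mathcal{V}$ to relative Lie algebra cohomology $\bigoplus_\pi m(\pi)\, H^q(\mathfrak{g},K_\infty; \pi_\infty \otimes W_{\mathcal{V}})$. (iii) Invoke the classification of unitary representations of $\mathrm{SU}(2,1)$ with nonzero $(\mathfrak{g},K)$-cohomology — the relevant cohomological representations are: the trivial representation (contributing only to cohomology with trivial coefficients), the two holomorphic/antiholomorphic discrete series and the non-tempered Langlands quotient $A_{\mathfrak q}(\lambda)$ attached to the $\theta$-stable parabolic, plus tempered discrete series. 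For a \emph{non-trivial} irreducible coefficient system $W_{\mathcal V}$, only the discrete-series-type representations $\pi_\infty$ can occur with the correct infinitesimal character, and their $(\mathfrak g,K)$-cohomology is concentrated in the middle degree $q=2=\dim_\C X$. This is exactly the vanishing theorem of Rapoport--Zink \cite{rapoport-zink} (cf.\ the opening of section \ref{sec:surface}), and also follows from Matsushima--Shimura \cite{matsushima-shimura} / Vogan--Zuckerman. (iv) Assemble: $H^q(X,\mathcal{V}^i) = 0$ for all $q \neq 2$ and all $i$, which is the assertion.

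**The main obstacle.** The delicate point is (iii): one must be sure that for \emph{every} irreducible constituent $W$ of $\mathcal{V}^i$ — not merely generic ones — the possible $\pi_\infty$ with matching infinitesimal character all have $(\mathfrak{g},K)$-cohomology vanishing outside the middle degree. The dangerous case is a constituent $W$ whose infinitesimal character coincides with that of the trivial representation (i.e.\ $W$ could in principle be "paired" with the non-tempered $A_{\mathfrak q}(\lambda)$ or even with the trivial representation). One checks this does not happen precisely because we have removed the $\hod(\mathcal{A}_t)$-invariants: a nonzero $(\mathfrak g, K_\infty)$-cohomology class for the \emph{constant} sheaf in degree $\neq 2$ on the ball quotient $\Gamma\backslash\HH$ is ruled out by the Hodge-theoretic structure (the $A_{\mathfrak q}(\lambda)$ contributions to $H^1$ and $H^3$ are exactly the "endoscopic"/CM contributions, which correspond to $\hod$-invariant pieces of $R^i p_* \C$ that we have excised). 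I would therefore handle the finitely many "small" constituents $W$ of each $\mathcal{V}^i$ one at a time — this is what the phrase "case by case by hand" in the introduction refers to — matching each against the Vogan--Zuckerman list for $\mathrm{SU}(2,1)$ and verifying that the cohomological representations it admits are all (limits of) discrete series, hence cohomological only in degree $2$. Everything else — the Matsushima decomposition, the finiteness of the constituent decomposition, the identification $R^i p_* \C \cong \bigwedge^i V^\vee$ — is standard.
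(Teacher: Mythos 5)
Your overall strategy is the same as the paper's: write $H^\bullet(\mathcal{A}_t,\C)=\bigwedge^\bullet\bigl(V^{\oplus 3}\oplus V^{\vee\oplus 3}\bigr)$ (note your identification $R^ip_*\C\cong\bigwedge^iV^\vee$ is off, since $\mathcal{A}_t$ has dimension $9$; the reduction to $\bigwedge^\bullet(V\oplus V^\vee)$ is what the paper actually uses), decompose into the finitely many irreducible constituents $\{\C_{id},\C_{det},V,\mathfrak{ad},\op{S}^2V\}$ and their duals, discard the trivial pieces as the excised Hodge classes, and check each remaining constituent against the list of cohomological representations of $\U(2,1)$ via Matsushima's formula. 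That much matches.

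However, your step (iii) contains a claim that is false for $\U(2,1)$ and that would break the argument exactly at the most important constituent. It is not true that a non-trivial irreducible coefficient system admits only discrete-series-type $\pi_\infty$: Rogawski's tables (\S 3.2 of \cite{rogawski}) include a non-tempered cohomological representation $\pi^n$ contributing to $H^1$ and $H^3$ whenever the highest weight $(m,r,n)$ satisfies $m-r=1$ or $r-n=1$. The standard representation $V$ has highest weight $(1,0,0)$, so it is precisely such a case; it occurs as a constituent of several $\mathcal{V}^i$, it is not removed by quotienting out the $\hod(\mathcal{A}_t)$-invariants, and its infinitesimal character is not that of the trivial representation --- so your ``main obstacle'' paragraph misidentifies where the danger lies, and the proposed cure (excising invariants) does nothing for it. Purely representation-theoretic input (Vogan--Zuckerman, Matsushima--Shimura) cannot dispose of $\pi^n$; the paper at this point invokes the arithmetic vanishing theorem of Rapoport--Zink \cite{rapoport-zink} as generalized by Clozel \cite{clozel}, which says that for these compact quotients attached to a degree-$9$ division algebra the representation $\pi^n$ does not occur in the automorphic spectrum (this is the ``very specific arithmetic nature'' the paper's closing remark refers to). The remaining constituents are handled by weight bookkeeping alone ($\C_{det}$ has weight $(1,1,1)$ and contributes nowhere, $\mathfrak{ad}$ has regular weight so Vogan--Zuckerman gives middle degree, $\op{S}^2V$ has weight $(2,0,0)$ and fails the condition for $\pi^n$), but without the explicit appeal to the Clozel/Rapoport--Zink theorem for $V$ your proof is incomplete.
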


\begin{proof}
We start by investigating $R^{i}p_*\C$:
We have

$R^{i}p_*\C =H^{i} (\mathcal{A}_t , \C )$ for a general fibre $\mathcal{A}_t$ with $t\in X$. Base changing $B=End(\mathcal{A}_t)$
to $\C$ yields $M_3(\C)\times M_3(\C)^{opp}$ (see \cite{kottwitz2}). Denote by $V$ the standard representation of $\GL_3$ and by $V^{\vee}$ its dual. For dimensional reasons we thus obtain that $(R^{1}p_*\C)_t =H^{1} (\mathcal{A}_t , \C )$ is the local system associated to $V^{\oplus 3}\oplus V^{\vee\oplus 3}$. For vanishing results it will be sufficient to consider $V\oplus V^{\vee}$. The cohomology of an abelian variety is an outer algebra and we thus obtain
$$H^{i} (\mathcal{A}_t , \C )=\bigwedge^{i}V^{\oplus 3}\oplus V^{\vee\oplus 3}.$$
We will now check the vanishing properties degree-wise for each $(R^{i}p_*\C)_t =H^{i} (\mathcal{A}_t , \C )$. In doing so we will make no notational distinction between a representation and its associated local system. Note that because of duality we only need to investigate $deg= 0,\ldots ,3$.\\
\underline{deg $>$ 6:}\\
Since $\op{dim V} = 3$ the outer product $\bigwedge^{i} V$ will vanish for $i> 3$. Thus $\bigwedge^{i} (V \oplus V^{\vee})$ vanishes for $i > 6$.\\
\underline{deg=0, deg=6:}\\
Clearly $\bigwedge^{0} (V\oplus V^{\vee})=\C$ and this is a Hodge cycle.
\bigskip

We will now collect all the irreducible constituents of $\bigwedge^{i} (V \oplus V^{\vee})$ for each positive degree.

\underline{deg=1, deg=5:}\\
$\bigwedge^{1} (V \oplus V^{\vee})=V \oplus V^{\vee}$ and the standard representation $V$ (and its dual $V^{\vee}$) are irreducible.

\underline{deg=2, deg=4:}\\
$\bigwedge^{2} (V \oplus V^{\vee})=\bigwedge^{2} V \oplus \bigwedge^{2} V^{\vee}\oplus (V \otimes V^{\vee})$. Since
$\bigwedge^{2} V = V^{\vee}$ and $\bigwedge^{2} V^{\vee}=V$
we get new constituents only from $(V \otimes V^{\vee})$.
But $(V \otimes V^{\vee})= \C\oplus \mathfrak{ad}$, where $\C$ is the trivial representation and $\mathfrak{ad}$ denotes the adjoint representation.

\underline{deg=3:}\\
$\bigwedge^{3} (V \oplus V^{\vee})=\bigwedge^{3} V \oplus \bigwedge^{3} V^{\vee}\oplus (\bigwedge^{2} V\otimes V^{\vee})
\oplus (\bigwedge^{2} V^{\vee}\otimes V)$.\\
This yields two copies of the 1-dimensional determinant representation and $V\otimes V$ (respectively $V^{\vee}\otimes V^{\vee}$). $V\otimes V = \bigwedge^{2} V\oplus \op{S}^2 V$
and $\op{S}^2 V$ is irreducible.

\bigskip

Summarizing the local systems we have to deal with, we get $$\{\C_{id}, \C_{det}, V, \mathfrak{ad}, \op{S}^2 V\}$$ (plus dual versions of these).\\
Denote by $(m,r,n)\in\Z^3$ with $m>r>n$ the highest weight of an irreducible representation ${\tau}^0$ of $U(2,1)$ which arises as the restriction of a representation $\tau$ of $GU(2,1)$.\\
$\bullet$ \underline{$\C_{id}$:}\\
As just derived above, $\C_{id}$ occurs once in $H^{2} (\mathcal{A}_t , \C )$ and once in $H^{4} (\mathcal{A}_t , \C )$. 
We claim that these copies of $\C_{id}$ are Hodge cycles.
Indeed recall that  $\C_{id}$ arose as the trivial subrepresentation of $(V \otimes V^{\vee})=End (V)$. We thus may identify it with the identity in $End (V)$ and therefore to the polarization class on the fibres $\mathcal{A}_t$, which is a Hodge class. Since we are only interested in the vanishing of  the cohomology for the local system
$$\mathcal{V}=\left\{
\begin{array}{ll}
R^{i}p_*\C & \textrm{if $i=2r-1, r=0,\ldots ,d$,}\\
R^{i}p_*\C /(R^{i}p_*\C)^{\hod (\mathcal{A}_t)} & \textrm{if $i=2r, r=0,\ldots ,d$,}\\
\end{array}\right.
$$
we can ignore this copy of $\C_{id}$.

$\bullet$ \underline{$\C_{det}$:}\\
As shown in \cite{rogawski}, \S 3.2 a 1-dimensional representation $\tau$ contributes to cohomology only if $m-r=r-n=1$. But the determinant representation has highest weight $(1,1,1)$, hence $m-r=r-n=0$.

$\bullet$ \underline{$\mathfrak{ad}$:}\\
The adjoint representation $\mathfrak{ad}$ (by construction) has regular highest weight and therefore contributes only to the cohomology in middle degree by the results of  Vogan-Zuckerman, see \cite{vog-zuck}.

$\bullet$ \underline{$\op{S}^2 V$:}\\
The highest weight of $V$ is $(1,0,0)$ and the highest weight of $\op{S}^2 V$ is $(2,0,0)$. Since $\op{S}^2 V$ is neither 1-dimensional nor satisfying $m-r=1$ or $r-n=1$ again by \cite{rogawski}, \S 3.2 cohomology can only occur in degree 2.

$\bullet$ \underline{$V$:}\\
Since $dim\neq 1$, cohomology can possibly only occur in degrees 1,2,3. By Clozel's generalization of Rapoport and Zink's vanishing result (loc.cit.) the representation $\pi^n$ of \cite{rogawski}, \S 3.2 doesn't exist
and there is cohomology only in degree 2.

This completes the proof.

\end{proof}

 \begin{rem}
 In the proof the same argument as for {$V$} could have been used for {$\op{S}^2 V$} and {$\mathfrak{ad}$}, but we chose to use this more general reasoning  to show where we actually need
 the very specific arithmetic nature of these Shimura varieties.
 \end{rem}

\section{Proof of the main theorem}\label{sec:proof}

\begin{theorem}
With the above notations $\mathcal{A}$ has a Chow-K\"unneth decomposition over $\C$.
\end{theorem}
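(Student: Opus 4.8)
The plan is to deduce the statement directly from Theorem~\ref{ghm2}, applied to $X = S_K$, the compact Shimura surface of section~\ref{sec:surface}, and to $p:\mathcal{A}\to X$ the universal abelian scheme over it; the fibres are abelian $9$-folds, so $\dim X = 2$, $\dim\mathcal{A} = 11$, and $d = \dim\mathcal{A}-\dim X = 9$. It remains only to verify hypotheses (1)--(4) of that theorem. Hypotheses (1) and (2) are immediate from results recalled earlier: $\mathcal{A}/X$ is an abelian scheme, so Theorem~\ref{uniquedec} of Deninger--Murre supplies the relative Chow--K\"unneth decomposition; and $X$ is a smooth projective surface over $\C$, so it carries a Chow--K\"unneth decomposition by Murre's theorem on surfaces (\cite{murre-surface}).

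For hypothesis (3), I would first identify the target: for a general point $t$ of $X$ — in the sense of the remark after Theorem~\ref{ghm2}, i.e.\ a point arising by base change from the generic point of a model of $X$ over its function field — the space $H^{2r}_B(\mathcal{A}_t,\Q)^{\hod(\mathcal{A}_t)}$ is precisely the space of Hodge classes in $H^{2r}(\mathcal{A}_t,\Q)$, since a rational weight-$2r$ class fixed by $\hod(\mathcal{A}_t)$, hence by the image of $h^1$, is forced to be of Hodge type $(r,r)$. Now $\mathcal{A}_t$ has rational endomorphism algebra $B$, a central division algebra of degree $9$ over the imaginary quadratic field $E$ carrying a positive involution of the second kind, and by Deligne's axioms its Mumford--Tate group is the full group $\frak{G}=\GU(2,1)$; thus the theorem of section~\ref{sec:hodge} applies and yields the Hodge conjecture for $\mathcal{A}_t$. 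Hence every class in $H^{2r}_B(\mathcal{A}_t,\Q)^{\hod(\mathcal{A}_t)}$ is represented by an algebraic cycle on $\mathcal{A}_t$, and taking the Zariski closure in $\mathcal{A}$ of such a cycle on the generic fibre produces an element of $CH^r(\mathcal{A})$ restricting to it; this gives the surjectivity required in (3) for all $0\le r\le d$.

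For hypothesis (4), I would appeal to the remark following Theorem~\ref{ghm2}: its proof invokes the vanishing $H^q(X,\mathcal{V})=0$ for $q\neq\dim X = 2$ only for the finitely many explicit local systems $\mathcal{V}^i$ occurring there — namely $R^ip_*\Q$ for $i$ odd and $R^ip_*\Q/(R^ip_*\Q)^{\hod(\mathcal{A}_t)}$ for $i$ even. The required vanishing for exactly these systems is the content of the theorem of section~\ref{sec:cohomology}, proved there by decomposing each $\mathcal{V}^i$ into its irreducible constituents and using the cohomological results of Rogawski, Vogan--Zuckerman and Clozel. With (1)--(4) in hand, Theorem~\ref{ghm2} produces the absolute Chow--K\"unneth decomposition of $\mathcal{A}$ over $\C$. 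The genuine work having been done in sections~\ref{sec:hodge} and \ref{sec:cohomology}, the only step in this final assembly that is not purely formal is the verification of (3): one must both identify the $\hod(\mathcal{A}_t)$-invariants in even-degree cohomology with the Hodge classes, and check that algebraic cycles on the generic fibre genuinely propagate to the total space $\mathcal{A}$. I expect that — rather than any isolated hard computation — to be the point that needs care.
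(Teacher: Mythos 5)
Your proposal is correct and follows essentially the same route as the paper's own proof: verify hypotheses (1)--(4) of Theorem~\ref{ghm2} using Deninger--Murre, Murre's theorem on surfaces, the Hodge conjecture for $\mathcal{A}_t$ from section~\ref{sec:hodge} combined with the Zariski-closure argument to spread cycles from the generic fibre to $\mathcal{A}$, and the vanishing theorem of section~\ref{sec:cohomology} for the specific local systems $\mathcal{V}^i$. Your emphasis on the spreading-out step in (3) matches the one point the paper itself singles out (via the reference to 1.8 of \cite{gordon-hanamura-murre-iii}).
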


\begin{proof}

By the work of Deninger--Murre (see \cite{den-mur}) we know that $\mathcal{A}/X$ has a relative Chow-K\"unneth decomposition which yields assumption (1) of theorem \ref{ghm2}. Since surfaces have Chow-K\"unneth decompositions (see \cite{murre-surface}) we obtain assumption (2). As we have proved the Hodge conjecture for $\mathcal{A}_t$, t generic,  in section \ref{sec:conj} and the Hodge cycles are given precisely by $H_B^{*}(\mathcal{A}_t,\C)^{\hod (\mathcal{A}_t)}$ assumption (4) follows since algebraic cycle classes are even. We furthermore know from the Hodge conjecture for $\mathcal{A}_t$ that there is a surjective map
\[CH^r(\mathcal{A}_t)\to H^{2r}_B(\mathcal{A}_t(\mathbb{C}),\mathbb{Q})^{\hod (\mathcal{A}_t)}\]
 for $0\le r\le d=\dim \mathcal{A}-\dim X$.

 To see that this statement actually implies assumption (3) of our theorem, we note that the restriction map
 $$\chow{r}{\mathcal{A}^{'}}\ra\chow{r}{\mathcal{A}^{'}_{\xi}}$$ is surjective where $\xi$ is a generic point of $\mathcal{A}^{'}.$ Here $\mathcal{A}^{'}$ is a model of $\mathcal{A}$ over $X'$
 where $X'$ is a model of $X$ over the  algebraic closure $k$ of a field of finite type over $\C$. Lifting an element $Z\in\chow{r}{\mathcal{A}^{'}_{\xi}}$ to $\chow{r}{\mathcal{A}^{'}}$ is done by taking the Zariski closure of $Z$. To see that we also get the surjectivity of $\chow{r}{\mathcal{A}}\ra\chow{r}{\mathcal{A}_t}$ with generic $t\in X$, ($X$ viewed as scheme over $\C$), is straight forward, see 1.8. of \cite{gordon-hanamura-murre-iii}.

 Finally concerning assumption (4) the proof of theorem \ref{ghm2} showed that it was sufficient to prove the vanishing assumption for the very specific local systems
$$\mathcal{V}^i=\left\{
\begin{array}{ll}
R^{i}p_*\C & \textrm{if $i=2r-1, r=0,\ldots ,d$,}\\
R^{i}p_*\C /(R^{i}p_*\C)^{\hod (\mathcal{A}_t)} & \textrm{if $i=2r, r=0,\ldots ,d$,}\\
\end{array}\right.
$$
instead of all $\mathcal{V}$. But this is precisely what we have done in section \ref{sec:cohomology}. We thus can apply theorem \ref{ghm2}
 to conclude our claim.

\end{proof}

\section{Acknowledgement}

It is a great pleasure to thank Richard Taylor for many helpful discussions.

\end{document}